\documentclass[reqno,  11pt, epsfig, amsfonts]{amsart}
\usepackage{amssymb,  amsmath,  amsthm}
\usepackage[colorlinks=true, linkcolor=blue, citecolor=red]{hyperref}
\usepackage[nameinlink,capitalize,noabbrev]{cleveref}%智能引用
\usepackage{color}
\usepackage{graphicx}

\usepackage{mathrsfs}%花体
\usepackage[T1]{fontenc}

\usepackage{upgreek}
\usepackage{tcolorbox}

\usepackage{commath}
\usepackage{graphicx}
\usepackage{fancyhdr}%页眉页脚
\usepackage{framed}
\usepackage{exscale}%增大积分号
\usepackage{relsize}
\usepackage{array}%制表
\usepackage{tcolorbox}%文字加框
\usepackage {subcaption}%图片加注释
\usepackage{lscape}%设置纸张方向
\usepackage{float}%强制插图
\usepackage[utf8]{inputenc}
\usepackage{caption}

\usepackage{tikz}
\tikzset{
	inputnode/.style={draw, circle, inner sep=2pt, font=\scriptsize},
	hiddennode/.style={draw, circle, inner sep=2pt, font=\scriptsize},
	outputnode/.style={draw, circle, inner sep=2pt, font=\scriptsize},
}

\newtheorem{theorem}{Theorem}[section]
\newtheorem{lemma}[theorem]{Lemma}

\newtheorem{cor}[theorem]{Corollary}

\theoremstyle{definition}
\newtheorem{definition}[theorem]{Definition}

\newtheorem{Hypothesis}[theorem]{Hypothesis}	

\newtheorem{notation}[theorem]{Notation}

\theoremstyle{remark}
\newtheorem{remark}[theorem]{Remark}

\numberwithin{equation}{section}

\theoremstyle{remark}

\newtheorem*{Outline*}{Outline}

\numberwithin{equation}{section}

\usepackage{yhmath}%弧
\usepackage{enumitem}%\item

\title[Topological Horseshoe Induced by Periodic Switching]{Topological Horseshoe Induced by Periodic Switching Between Non-Isochronous Planar Systems}

\author[J. Cheng]{Junfeng Cheng}
\address{School of Mathematics and Statistics,  Huazhong University of Science and Technology,  Wuhan, 430074, China}

\email{chengjf@hust.edu.cn}
\author[X.-S. Yang]{Xiao-Song Yang}
\address{School of Mathematics and Statistics,  Huazhong University of Science and Technology,  Wuhan, 430074, China}
\address{Hubei Key Laboratory of Engineering Modeling and Scientific Computing,  Huazhong University of Science and Technology,  Wuhan 430074, China}
\email{yangxs@hust.edu.cn}
\begin{document}

%    \subjclass is required.
\subjclass[2020]{Primary 34C; Secondary 34C28, 37J}

%37Dxx Dynamical systems with hyperbolic behavior
%
%37D45 Strange attractors, chaotic dynamics of systems with hyperbolic behavior
%
%37Jxx Dynamical aspects of finite-dimensional Hamiltonian and Lagrangian systems
%
%37J46 Periodic, homoclinic and heteroclinic orbits of finite-dimensional Hamiltonian systems

%
%34Cxx Qualitative theory for ordinary differential equations [See also 37-XX]
%
%34C25 Periodic solutions to ordinary differential equations
%34C28 Complex behavior and chaotic systems of ordinary differential equations [See also 37Dxx]

\date{}

\dedicatory{}

\keywords{Topological horseshoe, non-isochronicity, switching system, Hamiltonian system}

\begin{abstract}
	We establish a criterion for the existence of a topological horseshoe in a class of planar systems generated by periodic switching between two subsystems, each admitting a family of closed orbits, where the mechanism for chaos arises from the non-isochronicity of each subsystem.
	
	Exploiting the relationship between the period function of a Hamiltonian system and the rate of change of the area enclosed by its periodic orbits, we derive a criterion, which can be checked by numerical methods, for the existence of horseshoe in planar systems obtained by switching between two Hamiltonian subsystems. Furthermore, by invoking monotonicity results for the period function in Newtonian Hamiltonian systems, we obtain an explicit and computable criterion that guarantees chaotic dynamics in planar systems generated by switching between two such subsystems.
\end{abstract}

\maketitle

%    Text of article.
\section{Introduction}
Switching systems constitute an important class of dynamical systems characterized by a family of continuous-time subsystems and a rule that governs the switching between them.
More precisely, such systems can be described in the form
\begin{equation}
	\dot{x}(t) = f_{\sigma}(x(t), t),
\end{equation}
where \(x(t)\in\mathbb{R}^n\) denotes the state variable, \(\{f_i\}\) is a collection of vector fields, and \(\sigma\) is a piecewise constant function of time, called a switching signal \cite{Liberzon1999}.

In specific situations, the value of $\sigma$ at a given time $t$ may depend solely on $t$ or $x(t)$, or both, or may be generated using more sophisticated techniques such as hybrid feedback with memory in the loop\cite{Liberzon1999}. 
Switching signals that depend explicitly on the state are referred to as state-dependent switching, whereas those determined solely by time are called periodic switching. It is well known that state-dependent switching mechanisms can induce complex dynamical behaviors, including chaos, even when each individual subsystem is relatively simple. For instance, it was verified in Ref.~\cite{Yang2002,YangLi2005} that the switching circuit
\begin{eqnarray}\label{ss}
	\begin{pmatrix}
		\overset{.}{x}\\ \overset{.}{y}\\ \overset{.}{z}
	\end{pmatrix}=\begin{pmatrix}
		r-1 & -1 & 0\\
		r   & -1 & 0\\
		0   & 0  & -1
	\end{pmatrix}\begin{pmatrix}
		x\\y\\z-c
	\end{pmatrix},
\end{eqnarray}
with parameters
\begin{equation}
	\begin{aligned}
		r&=2.1,\quad c=0,\quad \text{for } 2x+z\leq 1,\\
		r&=1,\quad c=0.9,\quad \text{otherwise},
	\end{aligned}
\end{equation}
exhibits chaotic dynamics via topological horseshoe theory.

The study of topological horseshoe theory was initiated by the proposal of the concept of the Smale horseshoe \cite{Smale1967}.  After that, the notion of the topological horseshoe was introduced by Kennedy and Yorke \cite{Kennedy2001-1,Kennedy2001-2}. Subsequently, Yang and Tang enhanced this concept in Ref.~\cite{Yang2004} to deal with piecewise continuous maps. 
These developments provide a powerful framework for detecting chaotic dynamics. 
In many existing works, numerical methods, such as Runge-Kutta schemes, are employed to investigate such behavior (see, e.g., \cite{Cheng2024,Cheng2025b}). 

This paper adopts a purely analytical approach to establish the existence of topological horseshoes in periodic switching systems.
More specifically, we identify a novel mechanism for the occurrence of chaos in a class of periodic switching systems, arising from the non-isochronicity within each subsystem. Motivated by the work of Barrientos et al.~\cite{Barrientos2017}, where chaotic dynamics were rigorously established in a limit case of a forced SIR epidemic model, we derive a criterion that combines both geometric and analytical conditions for the emergence of chaotic dynamics in planar periodic switching systems.
More precisely, under suitable assumptions on the families of closed orbits of the two subsystems, we establish the existence of a topological horseshoe by exploiting the strict monotonicity of the associated period functions.

The monotonicity of the period function is, in general, difficult to verify. We therefore restrict our attention to Hamiltonian systems. In this setting, we derive a computable criterion based on the relationship between the area enclosed by periodic orbits and the associated period function. Furthermore, for Newtonian Hamiltonian systems, where the monotonicity of the period function has been extensively studied (see, e.g., \cite{Chouikha1999}), we invoke classical results in Ref.~\cite{Chow1986} to obtain sufficient conditions for the monotonicity of the period function in such switching systems.

Finally, we present two illustrative examples. The first is a toy example demonstrating the applicability of our criteria to a simple periodic switching system. The second result improves upon the existing results in Ref.~\cite{Barrientos2017} by showing that the lower bounds on the dwell times of the subsystems are not required to be symmetric.

The remainder of this paper is organized as follows. In \cref{preliminary}, we briefly review the notion of topological horseshoes and the relevant theoretical framework. In \cref{general_systems}, we establish our main results for planar periodic switching systems.
In \cref{Hamiltonian_systems}, we further study the systems switching between two Hamiltonian subsystems, and, by exploiting properties of the associated period functions, derive corresponding criteria. In \cref{application}, we present two illustrative examples.
Finally, in \cref{discussion}, we compare our results with those in Ref.~\cite{Barrientos2017} and highlight several open problems that require further investigation, and in \cref{summary}, we conclude the paper with a summary of the main results and contributions.

\section{Review of topological horseshoes}\label{preliminary}
In this section, we present a review of topological horseshoes for the purposes of this paper.
Prior to exploring the concept of topological horseshoes, it is crucial to introduce the definition of semi-conjugation. The subsequent two definitions are referenced from Ref.~\cite{Wiggins1988}.

\begin{definition}\cite{Wiggins1988}
	Let $M$ and $N$ be topological spaces and consider two continuous maps $f:M\to M$ and $g:N\to N$. The map $f$ is said to be semi-conjugate to $g$ if there is a continuous surjective map $h:M\to N $  such that 
	\begin{eqnarray}
		h\circ f=g\circ h.
	\end{eqnarray}
\end{definition}

\begin{definition}\cite{Wiggins1988}
	Let $X$ be a metric space, and let $\mathcal{A}=\{0,1,2,\dots,m-1\}$ be a finite alphabet with $m$ symbols. 
	The $m$-shift map $\sigma$ is defined as 
	\begin{eqnarray}
		\sigma(s)_i=s_{i+1},
	\end{eqnarray}
	where $s\in\mathcal{A}^\mathbb{Z}=\{x=(x_i)_{i\in\mathbb{Z}}:x_i\in\mathcal{A}~\text{for all } i\in\mathbb{Z}\}$.
	
	Consider a  (piecewise) continuous map $f:X\to X$. If there exists a compact invariant set $\Lambda \subset X $ such that the restriction of $f$ to $\Lambda$ is semi-conjugate to the m-shift map $\sigma $,
	then $f$ is said to have an m-type topological horseshoe.  
\end{definition}

For the practical identification of horseshoes within applied problems, we also need to introduce the concept of crossing, detailed in Ref.~\cite{Yang2009}. Let us consider a compact and connected region \(D \subset \mathbb{R}^n\). Let \(B_i\) (\(i = 1, 2, \ldots, m\)) be compact, path-connected subsets  in \(D\), each homeomorphic to the unit cube. Denote the boundary of each set \(B_i\) by \(\partial B_i\), and consider a piecewise continuous map $f: D \rightarrow X$, which is continuous on each of the compact sets $B_i$.

\begin{definition} \cite{Yang2009}
	For each $B_i, 1\leq i\leq m$, let $B_i^1$ and $B_i^2$ be two fixed disjoint connected nonempty compact subsets (usually pieces of $\partial B_i$) contained in the boundary $\partial B_i$. A connected subset $l$ of $B_i$ is said to be a connection of $B_i^1$ and $B_i^2$ if $l\cap B_i^1\neq \emptyset$ and $l\cap B_i^2\neq \emptyset$. 
\end{definition}

\begin{definition}\label{thd} \cite{Yang2009}
	Let $l\subset B_i$ be a connection of $B_i^1$ and $B_i^2$. We say that $f(l)$ is crossing $B_j$, if $l$ contains a connected subset $\bar{l}$ such that $f(\bar{l})$ is a connection of $B_j^1$ and $B_j^2$, i.e., $f(\bar{l})\subset B_j$, while $f(\bar{l})\cap B_j^1\neq \emptyset$ and $f(\bar{l})\cap B_j^2\neq \emptyset$. In this case we denote it by $f(l)\mapsto B_j$. Furthermore, if $f(l) \mapsto B_j $ for every connection $l$ of $B_i^1$ and $B_i^2$, then $f(B_i)$ is said to be crossing $B_j$ and denoted by $f(B_i)\mapsto B_j$. To simplify the terminology, we refer to $B_i$ as a crossing block of $B_j$, and for clarity, we call this crossing the dimension one crossing.
\end{definition}

In light of the aforementioned definitions, we recall the following lemma.
\begin{lemma}\label{thm1} \cite{Yang2004} 
	Suppose that the map $f:D\to \mathbb{R}^n$ satisfies the following assumptions:\\		
	\noindent(1) There exist $m$ mutually path-connected disjoint compact subsets $B_1,B_2,\dots$ and $B_m$ of $D$, the restriction of $f$ to each $B_i$, i.e., $f|_{B_i}$ is continuous.\\		
	\noindent (2) The dimension one crossing relation $f(B_i)\mapsto B_j $ holds for $1\leq i,j\leq m$.\\	
	Then there exists a compact invariant set $K\subset D$, such that $f|_K$ is semi-conjugate to a m-shift map.
\end{lemma}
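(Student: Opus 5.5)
The plan is the standard symbolic-dynamics construction. First I build nested compact sets coded by finite words, then pass to the limit. For a finite word $s_0s_1\cdots s_k$ with entries in $\{1,\dots,m\}$, set
\[
B_{s_0\cdots s_k}=\{x\in B_{s_0}: f^j(x)\in B_{s_j},\ j=1,\dots,k\}.
\]
Each such set is compact, because $f|_{B_i}$ is continuous and the $B_i$ are compact; it is an intersection of closed preimages under continuous restrictions. The key claim is that every $B_{s_0\cdots s_k}$ is nonempty and, more precisely, contains a connection of $B_{s_0}^1$ and $B_{s_0}^2$ whose $k$-th iterate is a connection of $B_{s_k}^1$ and $B_{s_k}^2$. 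This follows by induction from the crossing hypothesis (2). Start with any connection $l_0$ of $B_{s_0}$; such connections exist since $B_{s_0}$ is path connected. Crossing gives a connected $\bar l_0\subset l_0$ with $f(\bar l_0)$ a connection in $B_{s_1}$. Applying crossing to $f(\bar l_0)$ gives a connected subset of it, whose preimage piece inside $\bar l_0$ we then need.

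The main obstacle is exactly this pullback step: given a connected subset $C\subset f(\bar l_0)$, I need a connected subset of $\bar l_0$ mapping into $C$ and hitting both boundary pieces at the next level. To avoid it, I will follow the Kennedy--Yorke/Yang--Tang route and work with compact connected sets (continua) throughout. Using compactness and a Zorn-type or minimal-continuum argument, I choose $\bar l$ as a continuum. Then, for a continuum $K$ with $f(K)$ meeting two disjoint closed sets $A$ and $B$ inside $B_j$, the set $K\cap f^{-1}(B_j)$ has a component meeting both $f^{-1}(A)$ and $f^{-1}(B)$. I prove this by the standard boundary-bumping/separation lemma for continua: if no such component existed, $K\cap f^{-1}(B_j)$ would split into two disjoint closed pieces separating the two sides, and this contradicts connectedness of the image connection. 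I expect this topological lemma to be the delicate point. I would state it separately before running the induction.

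Once the nested sets are nonempty, define for a one-sided sequence $s$ the set $\Lambda_s=\bigcap_k B_{s_0\cdots s_k}$. It is nonempty by the finite intersection property. Let $K_+=\bigcup_s\Lambda_s$; this is the set of points whose forward orbit stays in $\bigcup B_i$, so it is compact because the $B_i$ are disjoint and compact. Define $h(x)=(i_0,i_1,\dots)$, where $f^j(x)\in B_{i_j}$. The map $h$ is continuous, since disjointness makes the itinerary locally constant on each $B_i$ and the $f^j$ are continuous on the relevant pieces. It is surjective by nonemptiness of the $\Lambda_s$, and $h\circ f=\sigma\circ h$ holds by construction. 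This gives semi-conjugacy of $f$ to the one-sided shift. For the two-sided shift in the paper's definition, I would take $K=\bigcap_{n\ge0}f^n(K_+)$, the invariant part. This set is compact as a decreasing intersection of compact images, and it is invariant. Every two-sided sequence is realized by a compactness argument on backward segments: for each $n$, pick points realizing $s_{-n}\cdots s_n$ shifted, then take limits. Finally, $h$ is extended to record backward itineraries via a chosen preimage branch, which is well defined up to the semi-conjugacy requirement. I would rely on the cited construction in \cite{Yang2004} for this last bookkeeping step.
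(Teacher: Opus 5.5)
The paper gives no proof of this lemma; it is quoted from \cite{Yang2004}. The natural benchmark is therefore the standard Kennedy--Yorke/Yang--Tang argument. Your skeleton matches it: compact cylinders $B_{s_0\cdots s_k}$, the finite intersection property, and the itinerary map on the set of points whose forward orbit stays in $\bigcup_i B_i$. The one-sided part of your last paragraph is fine. Two steps, however, fail as proposed.

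First, nonemptiness. Your strengthened claim is not what crossing gives, since the set $\bar l\subset l$ of Definition~\ref{thd} need not meet $B^1_{s_0}$ or $B^2_{s_0}$. More seriously, the separation lemma you plan to prove is false. Let $f$ map $K=[0,1]$ onto a path that runs along a segment $[a,b]\subset B_j$ from $a$ to a point $m$ strictly between $a$ and $b$, leaves $B_j$, returns to $m$, and runs on to $b$. Then $f(K)$ meets $A=\{a\}$ and $B=\{b\}$, and even contains the connected set $[a,b]\subset B_j$ joining them. Yet no component of $K\cap f^{-1}(B_j)$ meets both $f^{-1}(A)$ and $f^{-1}(B)$; connectedness simply does not pull back under continuous maps.

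No such lifting is needed, though. Crossing yields connected sets $C_0\subset l_0$ and $C_r\subset f(C_{r-1})$ for $1\le r\le k-1$, with $f(C_{r-1})$ a connection of $B^1_{s_r}$ and $B^2_{s_r}$ for $1\le r\le k$. Pick $z_k\in f(C_{k-1})$. For $r=k,\dots,1$, choose $z_{r-1}\in C_{r-1}$ with $f(z_{r-1})=z_r$, which is possible because $z_r\in f(C_{r-1})$. Then $f^r(z_0)=z_r\in B_{s_r}$ for $0\le r\le k$, i.e.\ $z_0\in B_{s_0\cdots s_k}$. This pointwise pullback is all your compactness argument uses.

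Second, the two-sided extension cannot be done by choosing preimage branches. Since $\sigma$ is injective on $\mathcal{A}^{\mathbb{Z}}$, any $h$ with $h\circ f=\sigma\circ h$ satisfies $h(x)=h(y)$ whenever $f(x)=f(y)$. So $h$ cannot record the current symbol at two points of different blocks that have a common image.

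In fact the two-sided conclusion is false without injectivity. Take $D=B_1\cup B_2$ with $B_1=[0,\tfrac13]$, $B_2=[\tfrac23,1]$, and $f(x)=3x$ on $B_1$, $f(x)=3x-2$ on $B_2$. All hypotheses hold. Any compact invariant $K$ lies in the middle-thirds Cantor set, on which $f$ is the one-sided full $2$-shift. If $K$ factors onto the two-sided $2$-shift, its entropy is at least $\log 2$, which forces $K$ to be the whole Cantor set. There $h$ would be constant on the dense tail-equivalence classes, hence constant, contradicting surjectivity.

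So what can be proved in this generality is semi-conjugacy to the one-sided shift, which already gives topological entropy at least $\log m$ as in Lemma~\ref{TE}. For the two-sided version you must use injectivity of $f$, which holds for the Poincar\'e maps in the paper. Then:
\begin{itemize}
\item take $K=\{x: f^r(x)\in\bigcup_i B_i \text{ for all } r\in\mathbb{Z}\}$;
\item let $h(x)_r$ be the index of the block containing $f^r(x)$;
\item get continuity of the negative coordinates from continuity of $(f|_{\bigcup_i B_i})^{-1}$ on its compact image;
\item get surjectivity from your limiting argument on backward segments.
\end{itemize}
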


To describe the chaos of a system quantitatively, we recall a lemma on the topological entropy. For more details, readers are referred to Ref.~\cite{Robinson1995}.

\begin{lemma}\label{TE} \cite{Robinson1995}
	Let $X$ be a compact space, and $f:X\to X$ a continuous map. If there exists an invariant set $\Lambda\subset X$ such that $f|_\Lambda$ is semi-conjugate to the m-shift map $\sigma$, then we have
	\begin{equation}
		h(f)\geq h(\sigma)=\log m,
	\end{equation}
	where $h(f)$ denotes the topological entropy of the map $f$. 
	%	Furthermore, for every positive integer $k$, we have the following fact
	%	\begin{eqnarray}
		%		h(f^k)=kh(f).
		%	\end{eqnarray}
\end{lemma}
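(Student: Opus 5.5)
The plan is to compare the complexity of $f$ on $X$ with that of $\sigma$ on $\mathcal{A}^{\mathbb{Z}}$ through the semi-conjugacy. The key fact is that a continuous surjective factor map cannot increase topological entropy. I would argue in two steps. First, $h(f)\geq h(f|_\Lambda)$. Second, $h(f|_\Lambda)\geq h(\sigma)$. Finally I would compute $h(\sigma)=\log m$ directly.

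For the first step, I would first arrange that $\Lambda$ is compact. In the setting of the paper, $\Lambda$ is the compact invariant set provided by \cref{thm1}, so this holds. In general, one can replace $\Lambda$ by its closure: the closure is still invariant, and by continuity the semi-conjugacy $h$ extends to it, which is what one needs.

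Using the Bowen definition with $(n,\varepsilon)$-separated sets, any $(n,\varepsilon)$-separated set for $f|_\Lambda$ is also $(n,\varepsilon)$-separated for $f$ on $X$. Hence $h(f)\geq h(f|_\Lambda)$ immediately.

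For the second step, let $\pi:\Lambda\to\mathcal{A}^{\mathbb{Z}}$ be the continuous surjection with $\pi\circ f=\sigma\circ\pi$. I would use the open-cover (Adler–Konheim–McAndrew) definition of entropy. Given a finite open cover $\mathcal{U}$ of $\mathcal{A}^{\mathbb{Z}}$, the preimage $\pi^{-1}\mathcal{U}$ is an open cover of $\Lambda$. The relation
\[
\pi^{-1}\Bigl(\bigvee_{k=0}^{n-1}\sigma^{-k}\mathcal{U}\Bigr)=\bigvee_{k=0}^{n-1}f^{-k}\pi^{-1}\mathcal{U}
\]
holds because of the conjugacy relation. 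Since $\pi$ is surjective, a subcover of the right-hand side maps to a subcover of the left-hand side. Therefore
\[
N\Bigl(\bigvee_{k=0}^{n-1}\sigma^{-k}\mathcal{U}\Bigr)\leq N\Bigl(\bigvee_{k=0}^{n-1} f^{-k}\pi^{-1}\mathcal{U}\Bigr).
\]
Taking $\frac1n\log$, letting $n\to\infty$, and then taking the supremum over $\mathcal{U}$ gives $h(\sigma)\leq h(f|_\Lambda)$.

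For the third step, I would take the cover $\mathcal{U}$ of $\mathcal{A}^{\mathbb{Z}}$ by the $m$ cylinder sets $\{s:s_0=a\}$. These sets are clopen. The refinement $\bigvee_{k=0}^{n-1}\sigma^{-k}\mathcal{U}$ consists of exactly $m^n$ disjoint nonempty cylinders, so no proper subcover exists, and this cover contributes entropy $\log m$. The cover is a generator, since its refinements have diameters tending to zero in the product metric. The Kolmogorov–Sinai type generator theorem for topological entropy then gives $h(\sigma)=\log m$. An alternative is to note directly that $\log m$ is also an upper bound, because $(n,\varepsilon)$-separated sets have cardinality at most $m^{n+2c(\varepsilon)}$.

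The main obstacle is essentially bookkeeping rather than a conceptual difficulty. One must check compactness of $\Lambda$ so that entropy is well defined. One must also make sure the paper's notion of $h$ (for possibly piecewise continuous $f$) is being applied only to the continuous restriction; on $X$ compact with $f$ continuous, as the statement assumes, this issue disappears. Everything else is the standard monotonicity of entropy under factors and subsystems, as in Ref.~\cite{Robinson1995}.
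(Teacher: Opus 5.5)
The paper gives no proof of this lemma; it only cites Robinson. Your three-step argument is the standard proof behind that citation. The steps are: entropy does not increase on passing to a closed invariant subset, it does not increase on passing to a factor, and $h(\sigma)=\log m$. The argument is correct whenever $\Lambda$ is compact. That is the case in the paper's definition of a topological horseshoe and for the set produced by \cref{thm1}.

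\textbf{The gap.} Your remark about a general invariant $\Lambda$ contains a step that fails. A continuous map on $\Lambda$ need not extend continuously to $\overline{\Lambda}$; that requires something like uniform continuity. In fact the lemma is false without compactness of $\Lambda$, as the following example shows.
\begin{itemize}
\item[] Take an increasing homeomorphism $f$ of $[0,1]$ with $f(x)>x$ on $(0,1)$, so $h(f)=0$.
\item[] Fix $a\in(0,1)$, a Cantor set $K\subset(a,f(a))$, and a homeomorphism $\psi:K\to\mathcal{A}^{\mathbb{Z}}$.
\item[] Put $\Lambda=\bigcup_{n\in\mathbb{Z}}f^n(K)$ and $\pi(f^n(x))=\sigma^n(\psi(x))$ for $x\in K$.
\end{itemize}
The sets $f^n(K)$ lie in the pairwise disjoint intervals $(f^n(a),f^{n+1}(a))$, so each $f^n(K)$ is clopen in $\Lambda$. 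Hence $\pi$ is a well-defined continuous surjection with $\pi\circ f=\sigma\circ\pi$ on the invariant set $\Lambda$. Yet $h(f)=0<\log m$.

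In this example $\pi$ attains every value in every neighbourhood of $0$ and of $1$, so it has no continuous extension to $\overline{\Lambda}$. Whichever entropy notion you adopt for a non-compact $\Lambda$, one of your two comparison steps breaks:
\begin{itemize}
\item[] Bowen's separated-set entropy keeps Step 1 but loses Step 2, because $\pi$ is not uniformly continuous.
\item[] Open-cover entropy keeps Step 2 but loses Step 1.
\end{itemize}
So compactness of $\Lambda$ must be a hypothesis of the lemma; it is not something you can arrange.

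\textbf{A smaller inaccuracy.} In the two-sided shift, the joins $\bigvee_{k=0}^{n-1}\sigma^{-k}\mathcal{U}$ of the cylinder cover do not have diameters tending to zero, because the negative coordinates stay free. To get $h(\sigma)\le\log m$ you need either the two-sided joins together with the generator theorem for homeomorphisms, or simply your separated-set count. For the inequality $h(f)\ge\log m$ that the paper actually uses, you do not need the upper bound at all: with the cylinder cover, $h(f|_\Lambda)\ge h(f|_\Lambda,\pi^{-1}\mathcal{U})\ge h(\sigma,\mathcal{U})=\log m$ already suffices.
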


\section{Topological horseshoe induced by periodic switching in planar systems with a family of closed orbits}\label{general_systems}

Consider the $T$-periodic switching system in $\mathbb{R}^2$ composed of two  subsystems,
\begin{equation}\label{SwitchingSystem}
	\dot{x}=
	\begin{cases}
		f_1(x), \qquad nT &\leq t< nT+T_1,\\
		f_2(x), \quad nT+T_1&\leq t<nT+T_1+T_2,
	\end{cases}
\end{equation}
where $T_i$ denotes the dwell time associated with the vector field $f_i(x)$, and $T=T_1+T_2$.

The following hypothesis characterizes the necessary conditions that the family of closed orbits associated with each subsystem must satisfy.
\begin{Hypothesis}\label{hypothesis1}
	For each $i=1,2$, the subsystem $\dot{x}=f_i(x)$ has a family of non-trivial periodic orbits, and in particular there exist two closed orbits $\gamma_i$ and $\Gamma_i$ such that $\gamma_i$ is contained in the interior of the region enclosed by $\Gamma_i$.
	Let $\mathcal{R}_i$ denote the open annular region bounded by $\gamma_i$ and $\Gamma_i$. 
\end{Hypothesis}

The following hypothesis guarantees that, for each fixed \( t \), the associated flow map \( \phi_i(t,\cdot) \) defines a self-homeomorphism on \( \mathcal{R}_i \).
\begin{Hypothesis}\label{hypothesis2}
	Each vector field \( f_i \) is locally Lipschitz continuous on \( \mathcal{R}_i \).
\end{Hypothesis}

To characterize the period of closed orbits contained in $\mathcal{R}_i$, we introduce the following notation.
\begin{notation}
	Denote by $s_i$ and $S_i$ the areas enclosed by $\gamma_i$ and $\Gamma_i$, respectively. Then, for fixed $i$ and each $c\in[0,1]$, there exists a unique periodic orbit $\gamma$ whose enclosed area is
	\begin{equation}
		s_i + c(S_i - s_i).
	\end{equation}
	This defines a bijection between $c\in[0,1]$ and periodic orbits contained in $\mathcal{R}_i$. Let $\mathcal{E}_i$ denote the inverse of this correspondence. Accordingly, the period function
	\begin{equation}
		T_i:[0,1]\to \mathbb{R}^+,
	\end{equation}
	which assigns to each $c$ the minimal period of the corresponding orbit $\gamma=\mathcal{E}_i^{-1}(c)$, is well defined.
\end{notation}

Under this notation, we have
\begin{equation}
	\mathcal{R}_i=\{\gamma\subset \mathbb{R}^2:\ 0<\mathcal{E}_i(\gamma)<1\}, \quad i=1,2.
\end{equation}

Throughout this paper, the time-\(T\) Poincar\'e map is defined in the same way as the context of the Melnikov method \cite{Guckenheimer1984,Wiggins2013}. More precisely, we introduce the following definition.
\begin{definition}
	The $T$-Poincar\'e map associated with \eqref{SwitchingSystem} is defined by
	\begin{equation}\label{poincare_map1}
		P(x) = \phi_j(T_j, \phi_i(T_i, x)),
	\end{equation}
	where \( \phi_i(t,\cdot) \) denotes the flow generated by \( \dot{x} = f_i(x) \).
\end{definition}

We are now in a position to state the main result of this paper.
\begin{theorem}\label{MainTheorem1}
	Assume that \cref{hypothesis1} and \cref{hypothesis2} hold.
	Suppose further that the following conditions are satisfied:	
	\begin{enumerate}[label=(\roman*)]
		\item There exist indices $i\neq j$ such that either
		$\gamma_i\cap\mathcal{R}_j\neq\emptyset$ and
		$\gamma_i\not\subset\mathcal{R}_j$, or
		$\Gamma_i\cap\mathcal{R}_j\neq\emptyset$ and
		$\Gamma_i\not\subset\mathcal{R}_j$;
		\item The period function $T_i(c)$ is continuous and strictly monotone on $(0,1)$.
	\end{enumerate}
	Then, there exist constants $T_1^*>0$ and $T_2^*>0$ such that, whenever
	\begin{equation}
		T_i \ge 5T_i^*, \quad T_j \ge 3T_j^*, \quad i,j\in{1,2},\ i\neq j,
	\end{equation}
	the time-$T$ Poincar\'e map \eqref{poincare_map1}
	exhibits a topological horseshoe, where $T=T_1+T_2$. 
	%	More precisely, the Poincar\'e map is defined by
	%	\begin{equation}\label{poincare_map1}
		%		P(x)=\phi_j(T_j,\phi_i(T_i,x)).
		%	\end{equation}
	%	where $\phi_i(t,\cdot)$ denotes the flow generated by $\dot{x}=f_i(x)$.
\end{theorem}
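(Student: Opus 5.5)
We will apply \cref{thm1} to $P=\phi_j(T_j,\phi_i(T_i,\cdot))$ with $m=2$. The mechanism is a twist: since the period function of subsystem $i$ is strictly monotone, the flow $\phi_i(T_i,\cdot)$ rotates orbits of different energy levels by different angles. A curve that crosses the annulus $\mathcal{R}_i$ radially is therefore wound into a spiral that goes around $\mathcal{R}_i$ many times once $T_i$ is large. Condition (i) then guarantees that this spiral crosses the annulus $\mathcal{R}_j$ transversally several times. A second large dwell time in $\mathcal{R}_j$, whose period function we also use (we assume (ii) for the relevant index), winds each of those crossing pieces around $\mathcal{R}_j$ in turn. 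Composing the two twists, we obtain the required crossings.

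\textbf{Geometric setup.} Assume without loss of generality that $\gamma_i\cap\mathcal{R}_j\neq\emptyset$ and $\gamma_i\not\subset\mathcal{R}_j$. Then there is an arc of $\gamma_i$ that enters $\mathcal{R}_j$ from outside. By continuity, nearby orbits $\mathcal{E}_i^{-1}(c)$ with $c\in[0,\delta]$ do the same. We then choose two disjoint ``quadrilaterals'' $B_1,B_2$ in $\overline{\mathcal{R}_i}\cap\overline{\mathcal{R}_j}$ near this crossing region. Each is bounded by two arcs of $i$-orbits (levels $c=a$ and $c=b$) and two arcs of $j$-orbits. We take $B_k^1,B_k^2$ to be the two sides lying on the $j$-orbits with levels $\mathcal{E}_j=\alpha$ and $\mathcal{E}_j=\beta$. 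A connection $l$ of $B_k^1,B_k^2$ then joins two $j$-level sets inside $B_k$, and in particular its $i$-level $\mathcal{E}_i$ sweeps an interval.

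\textbf{Twist estimates.} This is where the constants come from. For the $i$-flow, monotonicity and continuity of $T_i(c)$ on $[a,b]$ give $|T_i(a)-T_i(b)|=\Delta_i>0$. Define $T_i^*$ in terms of $\max T_i(c)^2/\Delta_i$, so that in time $T_i^*$ the angular (phase) difference between levels $a$ and $b$ is at least one full turn. Over time $T_i\ge 5T_i^*$, the image $\phi_i(T_i,l)$ then contains a sub-arc that winds around $\mathcal{R}_i$ at least five times in phase. The cruder estimate is better here: we only use the phase of a point on its orbit, normalized to $[0,1)$, together with the intermediate value theorem. Each full phase turn traverses the $\gamma_i$-arc crossing $\mathcal{R}_j$ (from $\mathcal{E}_j<\alpha$ to $\mathcal{E}_j>\beta$), which yields several disjoint sub-arcs $\bar l$ whose $\phi_i$-images are connections of the $j$-levels $\alpha,\beta$ inside a fixed strip transversal to the $j$-orbits. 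The factor $5$ leaves margin for the two starting blocks and for losing partial turns at the ends.

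\textbf{Second flow and the main obstacle.} We apply $\phi_j(T_j,\cdot)$ with $T_j\ge 3T_j^*$, where $T_j^*$ is defined analogously from the period difference between $j$-levels $\alpha,\beta$. Each such strip-connection is wound at least three times around $\mathcal{R}_j$, so its image passes through both $B_1$ and $B_2$ as connections of $B_k^1,B_k^2$. These images stay on $j$-levels in $[\alpha,\beta]$ because $j$-orbits are invariant. This gives $P(B_k)\mapsto B_{k'}$ for all $k,k'$, and \cref{thm1} yields the horseshoe. Continuity of $P$ on each $B_k$ follows from \cref{hypothesis2}.

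The main obstacle is the bookkeeping needed to ensure that the sub-arcs selected after the $i$-flow actually sit inside the region where the $j$-twist acts on them as connections of the correct level sets, and that the final pieces lie entirely in $B_{k'}$ rather than merely meeting it. We will handle this by choosing sub-arcs via the intermediate value theorem on the continuous functions $\mathcal{E}_j$ and the $j$-phase along the curve, restricting at each stage to a minimal connected piece. Degenerate tangency between $\gamma_i$ and $\gamma_j$ is avoided by using only the non-containment in (i) to find points of $\gamma_i$ strictly on both sides of a $j$-level.
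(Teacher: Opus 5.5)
\textbf{There is a genuine gap: your blocks are oriented the wrong way for the order in which you compose the flows.} You distinguish the sides $B_k^1,B_k^2$ lying on the $j$-orbits (levels $\alpha,\beta$), but then apply $\phi_i(T_i,\cdot)$ first.

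A connection $l$ of two $j$-level sides need not cross any $i$-level. For example, the $i$-side of $B_k$ at level $a$ meets both $B_k^1$ and $B_k^2$, so it is a legitimate connection. Its image $\phi_i(T_i,l)$ is just another arc of the same $i$-orbit. So the claim that ``its $i$-level $\mathcal{E}_i$ sweeps an interval'' is false, and the five-turn spiral you derive from the $i$-twist is not available.

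The conclusion fails for the same reason. Take a curve joining the $j$-levels $\alpha$ and $\beta$ and wind it at least three times with $\phi_j(T_j,\cdot)$. The result is a spiral whose passes through $B_{k'}$ run along $j$-orbits. Each pass therefore joins the two \emph{$i$-sides} of $B_{k'}$ and covers only part of the $j$-range $[\alpha,\beta]$. In general no sub-arc contained in $B_{k'}$ joins $B_{k'}^1$ to $B_{k'}^2$. Invariance of $j$-levels only gives containment in $\{\alpha\le \mathcal{E}_j\le\beta\}$, not a connection. So the relation $P(B_k)\mapsto B_{k'}$ that you assert, with respect to the $j$-sides, does not hold.

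\textbf{The fix is to orient as the paper does.} Distinguish the sides lying on orbits of the flow applied first. The paper uses connections of $\mathcal{Q}_1^1$ and $\mathcal{Q}_1^3$, which are level sets of $T_1/\mathcal{T}_1$. Then the chain is consistent:
\begin{enumerate}
\item a connection crosses the $i$-levels;
\item $\phi_i$ spirals it, so that its passes through the quadrilateral join the $j$-sides;
\item $\phi_j$ spirals those, so that its passes cross the whole quadrilateral from one $i$-side to the other;
\item these passes therefore cross every block.
\end{enumerate}
Equivalently, your choice of sides would be consistent for the reversed composition $\phi_i(T_i,\phi_j(T_j,\cdot))$, which is conjugate to $P$. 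It is not consistent with the order you actually use.

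\textbf{The blocks also need to be specified.} For step 4 above, the two blocks must be disjoint sub-strips of a single quadrilateral $\mathcal{Q}$. They must be separated in $i$-level, and each must span the full $j$-range of $\mathcal{Q}$. Your $B_1,B_2$, both bounded by the same $i$-levels $a,b$ ``near this crossing region,'' cannot be two disjoint blocks of this kind.

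In the paper these blocks are $\mathcal{Q}_1=\{T_1/\mathcal{T}_1\in[n_1+1,n_1+3]\}$ and $\mathcal{Q}_2=\{T_1/\mathcal{T}_1\in[N_1-2,N_1]\}$. Each carries two full turns of relative rotation, and fitting both inside $\mathcal{Q}$ is exactly what $N_1-n_1\ge 5$, i.e.\ $T_1\ge 5T_1^*$, is for. The condition $T_2\ge 3T_2^*$ is then needed only to produce one full pass across $\mathcal{Q}$.

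With the sides swapped and the blocks chosen this way, the rest of your plan goes through. That includes the phase-plus-intermediate-value argument, the freedom in defining $T_i^*$ since only existence is claimed, and continuity from Hypothesis~\ref{hypothesis2}. It then follows the same linked-twist mechanism as the paper.
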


The following corollary is an immediate consequence of \cref{TE} and \cref{MainTheorem1}.
\begin{cor}
	Under the assumptions of \cref{MainTheorem1}, the Poincar\'e map $P$ defined in \eqref{poincare_map1} satisfies the following lower bound on its topological entropy:
	\begin{equation}
		h(P)\geq \log 2,
	\end{equation}
	where $h(P)$ denotes the topological entropy of $P$.
\end{cor}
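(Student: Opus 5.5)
The plan is to build the horseshoe for the corollary's composite map and apply \cref{thm1}. As I read the dwell-time condition, the corollary's map is $P^2$ (or a composite of the form $\phi_i\circ\phi_j\circ\phi_i$), and "$5T_i^*$, $3T_j^*$" records how many full twists each leg must supply. I will find two disjoint "rectangles" $B_1,B_2$ whose images under the relevant composite of flows stretch across both of them.

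Without loss of generality, condition (i) holds with $\gamma_i$: it meets $\mathcal R_j$ but is not contained in it, so by connectedness $\gamma_i$ crosses $\partial\mathcal R_j$. I then pick a small topological rectangle $B\subset\mathcal R_i\cap\mathcal R_j$ near a transversal crossing point. In $\mathcal R_i$ I use coordinates $(c,\theta)$, where $c=\mathcal E_i$ labels the orbit and $\theta\in\mathbb R/\mathbb Z$ is the normalized phase along it. The flow $\phi_i(t,\cdot)$ acts as $(c,\theta)\mapsto(c,\theta+t/T_i(c))$.

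Let $B=[c_0,c_1]\times$ (a small arc of phases), with $B^1,B^2$ the sides $c=c_0$ and $c=c_1$. Any connection $l$ of $B^1$ and $B^2$ has, by continuity and strict monotonicity of $T_i(c)$, an image under $\phi_i(t,\cdot)$ whose phase ranges over an interval of length at least
\[
t\,\bigl|1/T_i(c_0)-1/T_i(c_1)\bigr| .
\]
Choosing $T_i^*$ so that $T_i^*\,|1/T_i(c_0)-1/T_i(c_1)|\ge 1$, a dwell time $T_i\ge kT_i^*$ makes the image of $l$ wind at least $k$ times around the annulus. It therefore crosses any fixed radial strip $k$ times. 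The same twist argument applies to $\phi_j$ in $\mathcal R_j$, because $\gamma_j$, $\Gamma_j$ and the strict monotonicity of $T_j$ are available; if condition (ii) holds only for $T_i$, then $T_j^*$ must come from some other twist estimate for $\phi_j$. The geometric input is transversality: near the crossing point, the $\mathcal R_j$-orbit foliation and the $\mathcal R_i$-orbit foliation are not tangent. Consequently a curve that winds once around $\mathcal R_i$ must traverse a range of $\mathcal E_j$-levels, because $\gamma_i$ leaves $\mathcal R_j$.

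The key steps are as follows. (a) Define $T_i^*$ and $T_j^*$ via the twist estimate above, using compact subannuli $[c_0,c_1]$ so that $T_i(c)$ is bounded and the difference of frequencies is positive. (b) Show that $\phi_i(T_i,l)$ contains at least two disjoint subarcs, each a connection between the $\mathcal E_j$-level sides of a suitably chosen region $D\subset\mathcal R_j$. (c) Show that $\phi_j(T_j,\cdot)$ carries each such subarc so that it winds through the annulus $\mathcal R_j$ and returns across two disjoint target boxes $B_1,B_2\subset B$. The extra multiples in $5T_i^*$ and $3T_j^*$ absorb the phase offsets: one full turn lets the curve reach the intersection region, and further turns guarantee two disjoint full crossings of each $B_k$ for every starting connection. (d) Conclude $P(B_k)\mapsto B_l$ for all $k,l\in\{1,2\}$, where continuity on each $B_k$ follows from \cref{hypothesis2}. \cref{thm1} then gives a 2-shift semi-conjugacy, and \cref{TE} gives $h(P)\ge\log 2$.

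The main obstacle is step (b)–(c): making the crossing rigorous when the two foliations are only topologically transverse, with no smoothness assumed. I would first try to avoid differential transversality. Using only that $\gamma_i$ connects a point inside $\mathcal R_j$ to a point outside it, every closed $\mathcal R_i$-orbit near $\gamma_i$ contains a subarc joining an inner and an outer $\mathcal E_j$-level. A curve that winds fully around $\mathcal R_i$ shadows such orbits, so it contains a connection in the $\mathcal E_j$ sense. I would then pick $B$ as a tubular box bounded by two $\mathcal R_j$-level arcs and two $\mathcal R_i$-level arcs, which makes the crossing sides compatible for both flows.
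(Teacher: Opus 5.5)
Your closing step matches the paper's: two disjoint blocks with all four crossings, then \cref{thm1} with $m=2$, then \cref{TE}. In the paper, though, the corollary is a two-line consequence. \cref{MainTheorem1}, which the corollary lets you assume, already supplies those blocks: its proof exhibits $P_1^{-1}(\widetilde{\mathcal{Q}}_1)$ and $P_1^{-1}(\widetilde{\mathcal{Q}}_2)$ with $P_2\circ P_1\bigl(P_1^{-1}(\widetilde{\mathcal{Q}}_k)\bigr)\mapsto P_1^{-1}(\widetilde{\mathcal{Q}}_l)$ for all $k,l$. This gives a compact invariant set on which $P$ is semi-conjugate to the $2$-shift. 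You never invoke the theorem. Instead you rebuild the horseshoe and leave the heart of that rebuild, steps (b)--(c), as an acknowledged open obstacle. As written this is not a proof, and the missing piece is exactly the theorem. \cref{lem1} also already supplies the box bounded alternately by $\mathcal{E}_i$- and $\mathcal{E}_j$-level arcs that you reach in your last sentence. Replace (a)--(c) by a citation of \cref{MainTheorem1} and the argument is complete.

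You should also drop the opening misreading. $P$ is by definition one period, $\phi_j(T_j,\cdot)\circ\phi_i(T_i,\cdot)$. A $2$-block horseshoe for $P^2$ would only give $h(P)=\tfrac12 h(P^2)\ge\tfrac12\log 2$. The map $\phi_i\circ\phi_j\circ\phi_i$ with full dwell times is conjugate to $\phi_j(T_j,\cdot)\circ\phi_i(2T_i,\cdot)$, which is a different map. Your step (d) does correctly target $P$.

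If you still want a self-contained argument, note that your twist bound only covers connections spanning the whole band $[c_0,c_1]$. A connection of a sub-box $B_k$ feels only the twist of its own $\mathcal{E}_i$-sub-band. The bookkeeping behind the constants $5$ and $3$ runs as follows.
\begin{enumerate}
\item Split the band into two disjoint $\mathcal{E}_i$-sub-bands, each carrying at least two turns; this is the paper's $N_1-n_1\ge 5$.
\item Leg $i$ then produces, inside $B_k$, a piece joining the two $\mathcal{E}_j$-sides, so it spans the full $\mathcal{E}_j$-range.
\item Three turns of leg $j$ over that full range carry this piece along $\mathcal{E}_j$-levels across the whole box, hence across both $B_1$ and $B_2$ between their $\mathcal{E}_i$-sides.
\end{enumerate}
One such piece from leg $i$ suffices, so the doubling you ask for in (b) is unnecessary.
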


Before presenting the proof of \cref{MainTheorem1}, we introduce a lemma that clarifies the geometric meaning of Condition~(i).
\begin{lemma}\label{lem1}
	Assume that \cref{hypothesis1} holds. 
	If Condition~(i) of \cref{MainTheorem1} is satisfied,
	then there exists an open set $\mathcal{Q}\subset \mathcal{R}_1\cap\mathcal{R}_2$ whose boundary consists of four arcs belonging to closed orbits of the subsystems, such that $\mathcal{Q}$ is homeomorphic to a quadrilateral. Moreover, the preimages of any two adjacent edges of this quadrilateral lie on closed orbits of different subsystems.
\end{lemma}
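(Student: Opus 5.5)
Without loss of generality take $i=1$, $j=2$ in Condition~(i), and suppose first that $\gamma_1\cap\mathcal{R}_2\neq\emptyset$ and $\gamma_1\not\subset\mathcal{R}_2$; the case of $\Gamma_1$ is handled identically. The plan is to exhibit a Jordan curve $\gamma_1$ that genuinely crosses the annulus $\mathcal{R}_2$. We then use the family of closed orbits of the two subsystems as two transverse ``foliations'' of a small region, and carve out a curvilinear quadrilateral bounded alternately by level curves of $\mathcal{E}_1$ and of $\mathcal{E}_2$.

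\textbf{Step 1: locate a crossing point.} Since $\gamma_1$ is connected, meets the open set $\mathcal{R}_2$, and is not contained in it, $\gamma_1$ must meet $\partial\mathcal{R}_2=\gamma_2\cup\Gamma_2$. So $\gamma_1\cap\mathcal{R}_2$ is a nonempty relatively open proper subset of the circle $\gamma_1$. Take one of its components, an open arc $\alpha\subset\gamma_1$. Its endpoints lie on $\gamma_2$ or $\Gamma_2$. Inside $\mathcal{R}_2$ the function $\mathcal{E}_2$ is continuous and takes values in $(0,1)$. Along $\alpha$ it tends to $0$ or $1$ at the endpoints. Hence $\mathcal{E}_2$ is nonconstant on $\alpha$, and we may pick two values $0<c_2<c_2'<1$ whose orbits $\mathcal{E}_2^{-1}(c_2)$ and $\mathcal{E}_2^{-1}(c_2')$ both meet $\alpha$. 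Continuity of $\mathcal{E}_2$ follows from the continuous dependence of orbits on initial data (Hypothesis~\ref{hypothesis2}) and strict monotonicity of enclosed area across nested orbits.

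\textbf{Step 2: thicken in the $\mathcal{E}_1$ direction.} Near $\alpha$ we move slightly into $\mathcal{R}_1$, to orbits $\mathcal{E}_1^{-1}(c_1)$ with $c_1\in(0,\varepsilon)$. These orbits converge to $\gamma_1$ in the Hausdorff sense as $c_1\to0$. Choose $0<c_1<c_1'$ small, and consider the region
\[
\mathcal{Q}=\{x:\ c_1<\mathcal{E}_1(x)<c_1',\ c_2<\mathcal{E}_2(x)<c_2'\},
\]
restricted to the connected component adjacent to the chosen sub-arc of $\alpha$ between the two $\mathcal{E}_2$-levels. Its boundary then consists of two arcs of orbits of subsystem 1 and two arcs of orbits of subsystem 2, alternating, as required. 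Moreover $\mathcal{Q}\subset\mathcal{R}_1\cap\mathcal{R}_2$ by construction.

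\textbf{Step 3: the main obstacle.} The hard part is showing that this component is actually a topological quadrilateral. Level curves of $\mathcal{E}_1$ and $\mathcal{E}_2$ could be tangent or intersect many times, which would break $\mathcal{Q}$ into pieces or give it extra corners. First I would shrink everything to a neighbourhood of a point $p\in\alpha$ where $\gamma_1$ is \emph{topologically} transverse to the $\mathcal{E}_2$-foliation. Such a $p$ should be chosen, for instance, at a point where $\mathcal{E}_2|_\alpha$ is locally strictly monotone; if that fails, choose $p$ where $\mathcal{E}_2$ first leaves a value. In a small neighbourhood $U$ of $p$, both families are arcs, because the orbits form an annular flow box. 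Consider the map $x\mapsto(\mathcal{E}_1(x),\mathcal{E}_2(x))$ on $U$. It is continuous, and it is injective on $U$ if every $\mathcal{E}_1$-arc meets every $\mathcal{E}_2$-arc at most once there. Invariance of domain then makes it a homeomorphism onto an open set containing a rectangle $[c_1,c_1']\times[c_2,c_2']$, and the preimage of that rectangle is the desired $\mathcal{Q}$. Since only topological (not smooth) transversality is available, I expect to settle for a weaker argument if local injectivity is hard. That argument is Jordan-curve based: the two $\mathcal{E}_1$-arcs and the two $\mathcal{E}_2$-arcs each separate the flow box, and we take the component bounded by the first intersection points along each arc. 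This yields a Jordan domain with four boundary arcs, which is homeomorphic to a quadrilateral by the Schoenflies theorem.
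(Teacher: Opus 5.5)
Your Steps~1 and~2 are fine. The gap is the actual content of the lemma: that some component of $\{c_1<\mathcal{E}_1<c_1',\ c_2<\mathcal{E}_2<c_2'\}$ is a disk bounded by exactly four alternating orbit arcs. You assert this at the end of Step~2, and Step~3 does not establish it.

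Your criterion for choosing $p$ does not give injectivity of $(\mathcal{E}_1,\mathcal{E}_2)$ near $p$. Strict monotonicity of $\mathcal{E}_2$ along the single leaf $\gamma_1$ says nothing about the leaves $\mathcal{E}_1^{-1}(c)$, $c>0$, that actually bound $\mathcal{Q}$. For example, suppose near $p=(0,0)$ the two families are the level curves of $y$ and of $x^3+y(1-3x)$; both are regular foliations. Then $\mathcal{E}_2|_{y=0}=x^3$ is strictly monotone. Yet for every $c>0$ the leaf $y=c$ meets the leaf $x^3+y(1-3x)=c$ at $x=0,\pm\sqrt{3c}$, so injectivity fails on every neighbourhood of $p$. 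Also, a merely continuous nonconstant function need not be monotone on any subarc, and ``where $\mathcal{E}_2$ first leaves a value'' carries no transversality information.

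The Jordan-curve fallback has the same hole. Without a bound on intersections, sub-arcs cut at ``first intersection points'' need not close up into a simple closed curve. Nothing keeps the enclosed region inside $\mathcal{R}_1\cap\mathcal{R}_2$ either. Finally, $\mathcal{E}_1$ is only defined on one side of $\gamma_1$, so invariance of domain cannot be applied on a full neighbourhood of $p\in\gamma_1$.

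The missing idea is that you are \emph{not} limited to topological transversality. Under Hypothesis~\ref{hypothesis2} the orbits are $C^1$ integral curves of $f_1,f_2$, and neither field vanishes on $\alpha\subset\gamma_1\cap\mathcal{R}_2$.
\begin{itemize}
\item Suppose $\det(f_1,f_2)\equiv 0$ on $\alpha$. Then $f_1=\lambda f_2$ there with $\lambda$ continuous and nonzero, and the time change $ds=\lambda\,dt$ turns $\alpha$ into a solution of $\dot x=f_2(x)$. By uniqueness $\alpha$ lies on one $f_2$-orbit, contradicting the nonconstancy of $\mathcal{E}_2$ on $\alpha$ from your Step~1.
\item Hence $\det(f_1,f_2)\neq0$ at some point of $\alpha$. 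By continuity the same holds at some $q\in\mathcal{R}_1\cap\mathcal{R}_2$.
\item Take a small ball $W\ni q$ with $|f_k-f_k(q)|<\delta$. If an $f_1$-arc and an $f_2$-arc in $W$ met at $x\neq y$, then $y-x=\int_0^a f_1=\int_0^b f_2$. This gives $|af_1(q)-bf_2(q)|\le\delta(|a|+|b|)$, which forces $a=b=0$ for small $\delta$.
\item A periodic orbit crosses a short transversal at most once, so each closed orbit meets a small flow box around $q$ in a single plaque. Combined with the previous step, $(\mathcal{E}_1,\mathcal{E}_2)$ is injective near $q$.
\item Invariance of domain then makes the preimage of a small rectangle $[a_1,b_1]\times[a_2,b_2]\subset(0,1)^2$ around $(\mathcal{E}_1(q),\mathcal{E}_2(q))$ the required quadrilateral. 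Its sides lie alternately on $f_1$- and $f_2$-orbits.
\end{itemize}

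For comparison, the paper anchors $\mathcal{Q}$ at a corner $C$ where $\gamma_1$ meets $\Gamma_2$ (or $\gamma_2$), found by an infimum along $\gamma_1$. It then asserts from a figure that $\gamma_1$, $\Gamma_2$ and two nearby orbits bound a quadrilateral with vertex $C$. It never addresses the tangency issue you flagged, and the curves may well be tangent at $C$. Working at a transversal interior point, as above, is the more robust route.
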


\begin{proof}[Proof of \cref{lem1}] 
	For convenience, we fix $i=1$ and $j=2$, and consider the following two situations:
	\begin{equation}
		\gamma_1\cap\mathcal{R}_2\neq\emptyset \quad \text{and} \quad \gamma_1\not\subset \mathcal{R}_2,
	\end{equation}
	\begin{equation}
		\Gamma_1\cap\mathcal{R}_2\neq\emptyset \quad \text{and} \quad \Gamma_1\not\subset \mathcal{R}_2.
	\end{equation}
	Let $\mathcal{I}_i$ denote the region enclosed by $\gamma_i$, and let $\mathcal{O}_i$ denote the unbounded region exterior to $\Gamma_i$. Then the plane has the decomposition
	\begin{equation}
		\mathbb{R}^2=\mathcal{I}_i\sqcup\gamma_i\sqcup\mathcal{R}_i\sqcup\Gamma_i\sqcup\mathcal{O}_i,\qquad i=1,2.
	\end{equation}
	
	\medskip
	\noindent\textbf{Case I.} $\gamma_1\cap\mathcal{R}_2\neq\emptyset$ and $\gamma_1\not\subset \mathcal{R}_2$.
	
	\medskip
	Since $\gamma_1\not\subset \mathcal{R}_2$, there exists a point on $\gamma_1$ that lies outside $\mathcal{R}_2$. Consequently, at least one of the following two subcases must occur:
	\begin{equation}
		\gamma_1\cap\mathcal{O}_2\neq\emptyset,
	\end{equation}
	or
	\begin{equation}
		\gamma_1\cap\mathcal{I}_2\neq\emptyset.
	\end{equation}

	\noindent\textbf{Subcase I.} $\gamma_1\cap\mathcal{O}_2\neq\emptyset$.
	
	\medskip
	In this subcase, there exist two points $A$ and $B$ such that
	\begin{equation}
		A\in\gamma_1\cap\mathcal{R}_2,\qquad 
		B\in\gamma_1\cap\mathcal{O}_2.
	\end{equation}
	Let $l:[0,1]\to\gamma_1$ be a continuous parametrization of the arc $\wideparen{AB}\subset\gamma_1$ connecting $A$ to $B$. Define
	\begin{equation}
		t_*=\inf\bigl\{t\in[0,1]: l(s)\in\Gamma_2\cup\mathcal{O}_2 \text{ for all } s\in[t,1]\bigr\}.
	\end{equation}
	Set the point $C=l(t_*)$. By construction, we have $C\in\gamma_1\cap\Gamma_2$, and hence
	\begin{equation}
		\mathcal{E}_1(C)=0,\qquad \mathcal{E}_2(C)=1.
	\end{equation}
	
	%	Since each energy level in a neighborhood of $C$ uniquely determines a closed orbit, the inverse mappings $\mathcal{E}_i^{-1}(\cdot)$, $i=1,2$, are well defined locally.
	It follows that there exist $\varepsilon_1,\varepsilon_2>0$ such that the closed orbits
	\begin{equation}
		\tilde{\gamma}_1:=\mathcal{E}_1^{-1}(\varepsilon_1),
		\qquad
		\tilde{\Gamma}_2:=\mathcal{E}_2^{-1}(1-\varepsilon_2)
	\end{equation}
	are well defined. The curves $\gamma_1$, $\Gamma_2$, $\tilde{\gamma}_1$, and $\tilde{\Gamma}_2$ bound a region $\mathcal{Q}$ that is homeomorphic to a quadrilateral, with $C$ as one of its vertices. Moreover,
	\begin{equation}
		\mathcal{Q}\subset\mathcal{R}_1\cap\mathcal{R}_2.
	\end{equation}
	
	For the reader’s convenience, an illustration  is provided in \cref{illustration1}.
	\begin{figure}
		\centering
		\includegraphics[width=0.8\textwidth]{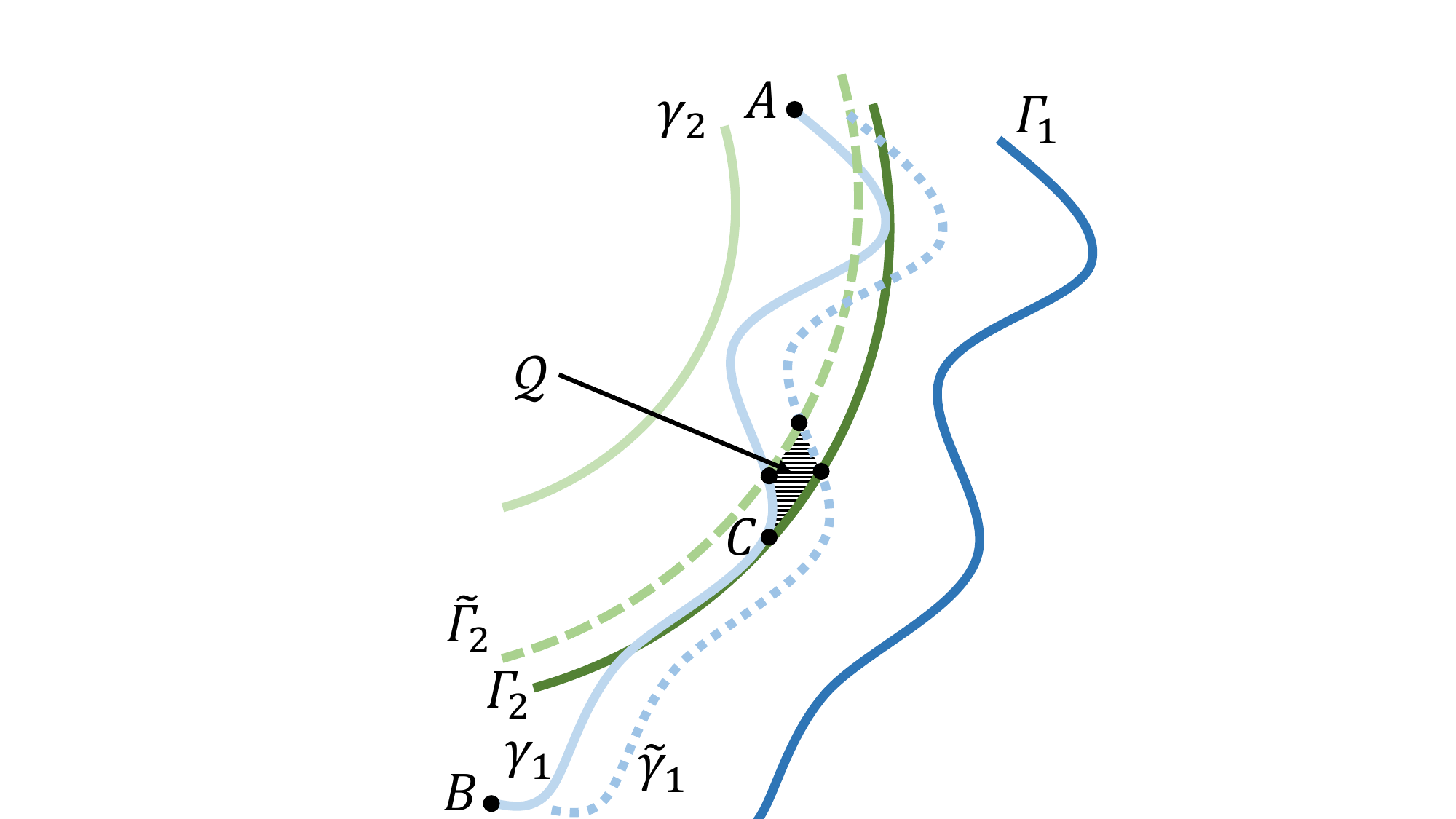}
		\caption{Case I, Subcase I: $\gamma_1\cap\mathcal{O}_2\neq\emptyset$. Solid curves denote portions of the orbits $\gamma_i,\Gamma_i$ $(i=1,2)$, whereas dotted curves and dashed curves denote portions of the orbits $\tilde{\gamma}_1$ and $\tilde{\Gamma}_2$, respectively.
		}
		\label{illustration1}
	\end{figure}
	
	\noindent\textbf{Subcase II.} $\gamma_1\cap\mathcal{I}_2\neq\emptyset$.
	
	\medskip
	Suppose there exist points $A\in \mathcal{R}_2$ and $B\in \mathcal{I}_2$, and let $l:[0,1]\to \mathbb{R}^2$ be a continuous path connecting $A$ to $B$, with $l(0)=A$ and $l(1)=B$. Define
	\begin{equation}
		t_*=\inf\bigl\{t\in[0,1]: l(s)\in \gamma_2\cup \mathcal{I}_2 \text{ for all } s\in [t,1] \bigr\},
	\end{equation}
	and set $C=l(t_*)$.
	
	There exist $\varepsilon_1,\varepsilon_2>0$ such that the following two orbits
	\begin{equation}
		\tilde{\gamma}_1:=\mathcal{E}_1^{-1}(\varepsilon_1), \qquad
		\tilde{\gamma}_2:=\mathcal{E}_2^{-1}(\varepsilon_2)
	\end{equation}
	are well defined. Then $\gamma_1$, $\gamma_2$, $\tilde{\gamma}_1$, and $\tilde{\gamma}_2$ bound a region $\mathcal{Q}$ homeomorphic to a quadrilateral, with $C$ as one of its vertices. Clearly,
	\begin{equation}
		\mathcal{Q}\subset \mathcal{R}_1\cap \mathcal{R}_2.
	\end{equation}
	An illustration is provided in \cref{illustration2}.
	\begin{figure}
		\centering
		\includegraphics[width=0.8\textwidth]{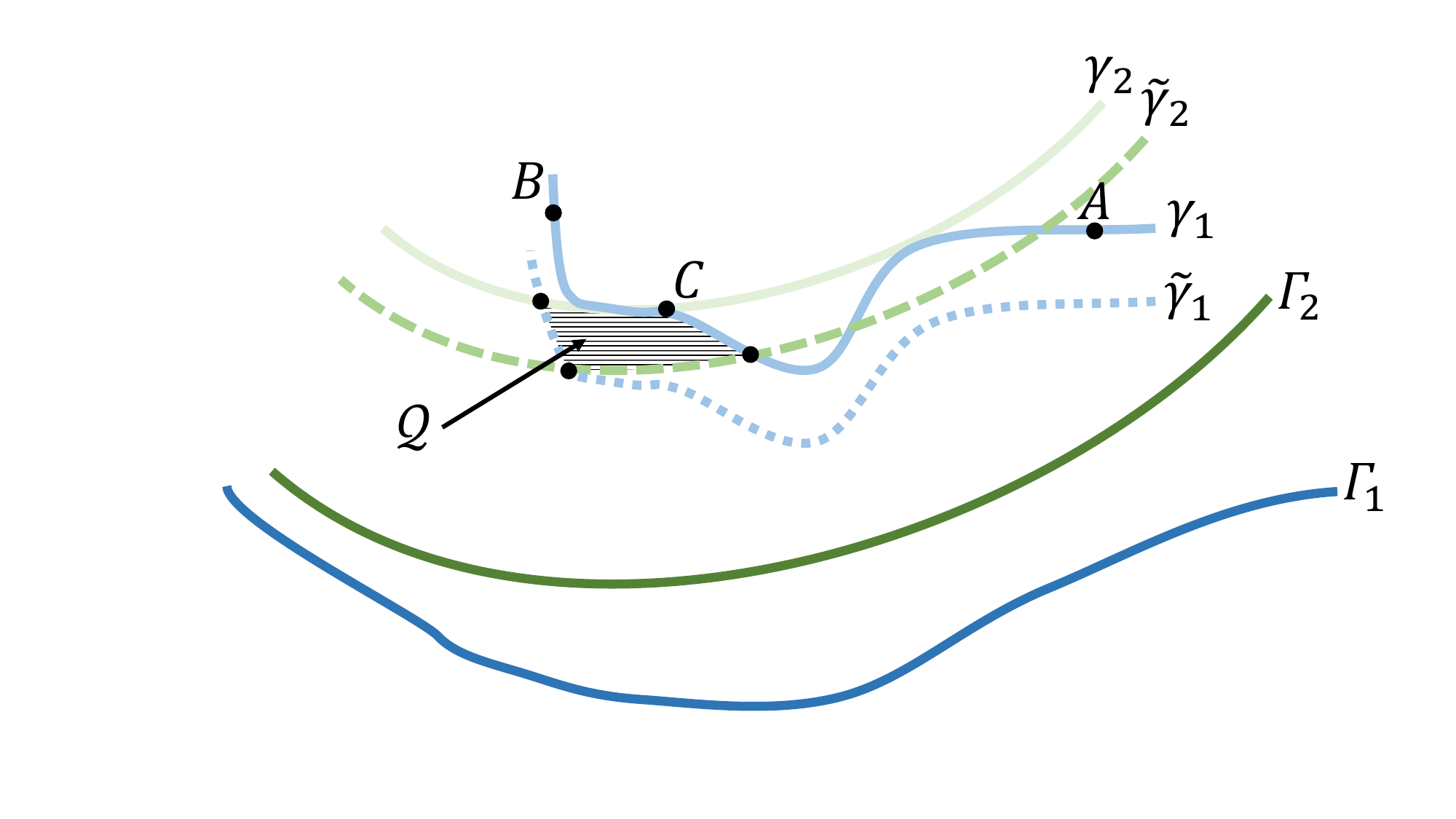}
		\caption{Case I, Subcase II: $\gamma_1\cap\mathcal{I}_2\neq\emptyset$. Solid curves denote portions of the orbits $\gamma_i,\Gamma_i$ $(i=1,2)$, whereas dotted curves and dashed curves denote portions of the orbits $\tilde{\gamma}_1$ and $\tilde{\gamma}_2$, respectively.}
		\label{illustration2}
	\end{figure}
	
	\medskip
	\noindent\textbf{Case II.} $\Gamma_1\cap \mathcal{R}_2 \neq \emptyset$ and $\Gamma_1 \not\subset \mathcal{R}_2$.
	
	\medskip
	\noindent\textbf{Subcase I.} $\Gamma_1\cap \Gamma_2 \neq \emptyset$.
	
	\medskip
	There exist $\varepsilon_1, \varepsilon_2>0$ such that
	\begin{equation}
		\tilde{\Gamma}_1:=\mathcal{E}_1^{-1}(1-\varepsilon_1), \qquad
		\tilde{\Gamma}_2:=\mathcal{E}_2^{-1}(1-\varepsilon_2),
	\end{equation}
	and the curves $\Gamma_1$, $\Gamma_2$, $\tilde{\Gamma}_1$, $\tilde{\Gamma}_2$ bound a region homeomorphic to a quadrilateral. An illustration is given in \cref{illustration3}.
	\begin{figure}
		\centering
		\includegraphics[width=0.8\textwidth]{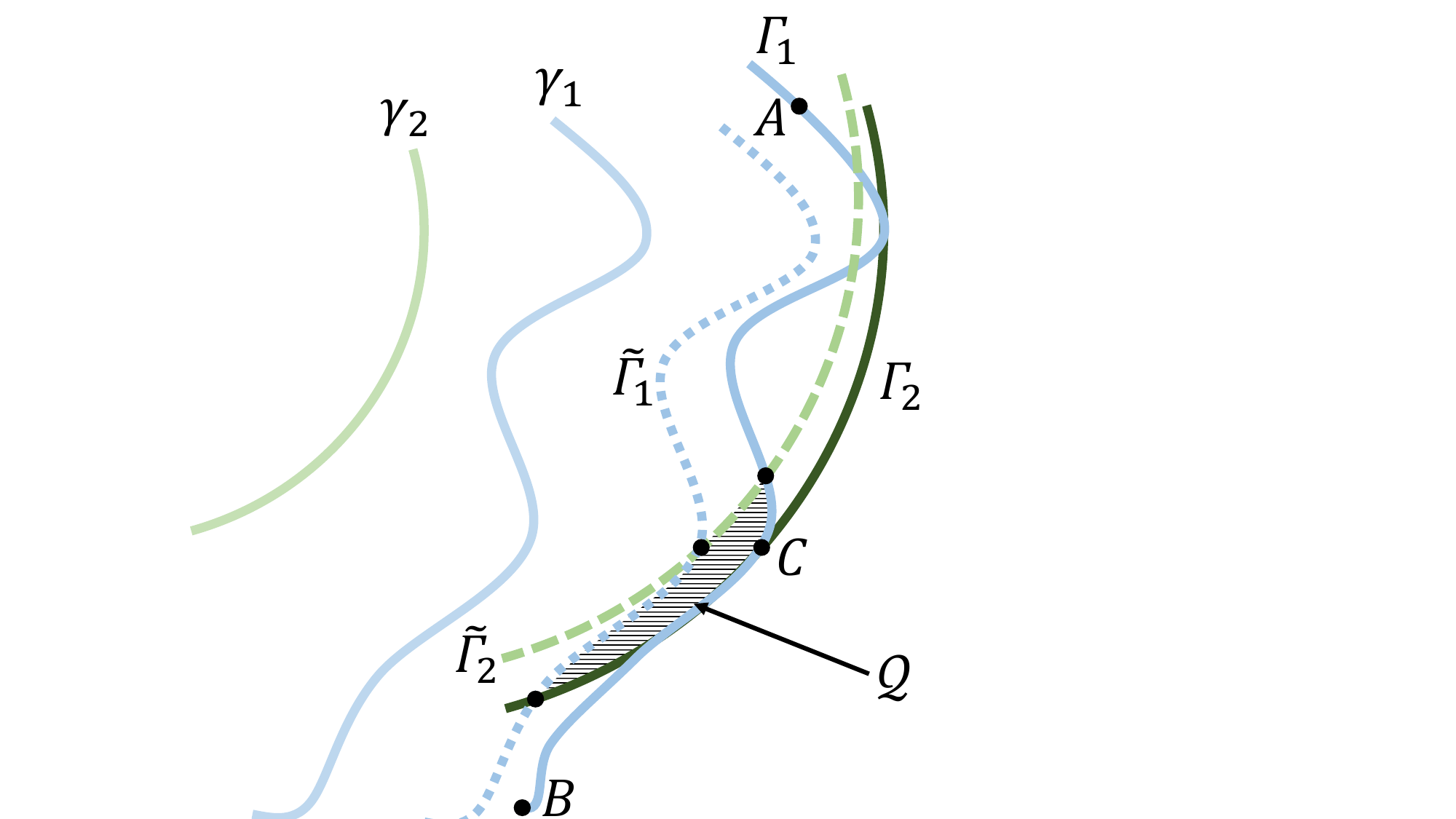}
		\caption{Case II, Subcase I: $\Gamma_1\cap \Gamma_2 \neq \emptyset$. Solid curves denote portions of the orbits $\gamma_i,\Gamma_i$ $(i=1,2)$, whereas dotted curves and dashed curves denote portions of the orbits $\tilde{\Gamma}_1$ and $\tilde{\Gamma}_2$, respectively.}
		\label{illustration3}
	\end{figure}
	
	\medskip
	\noindent\textbf{Subcase II.} $\Gamma_1\cap \gamma_2 \neq \emptyset$.
	
	\medskip
	Similarly, there exist $\varepsilon_1, \varepsilon_2>0$ such that
	\begin{equation}
		\tilde{\Gamma}_1:=\mathcal{E}_1^{-1}(1-\varepsilon_1), \qquad
		\tilde{\gamma}_2:=\mathcal{E}_2^{-1}(\varepsilon_2),
	\end{equation}
	and the curves $\Gamma_1$, $\gamma_2$, $\tilde{\Gamma}_1$, $\tilde{\gamma}_2$ bound a region homeomorphic to a quadrilateral. An illustration is provided in \cref{illustration4}.
	\begin{figure}
		\centering
		\includegraphics[width=0.8\textwidth]{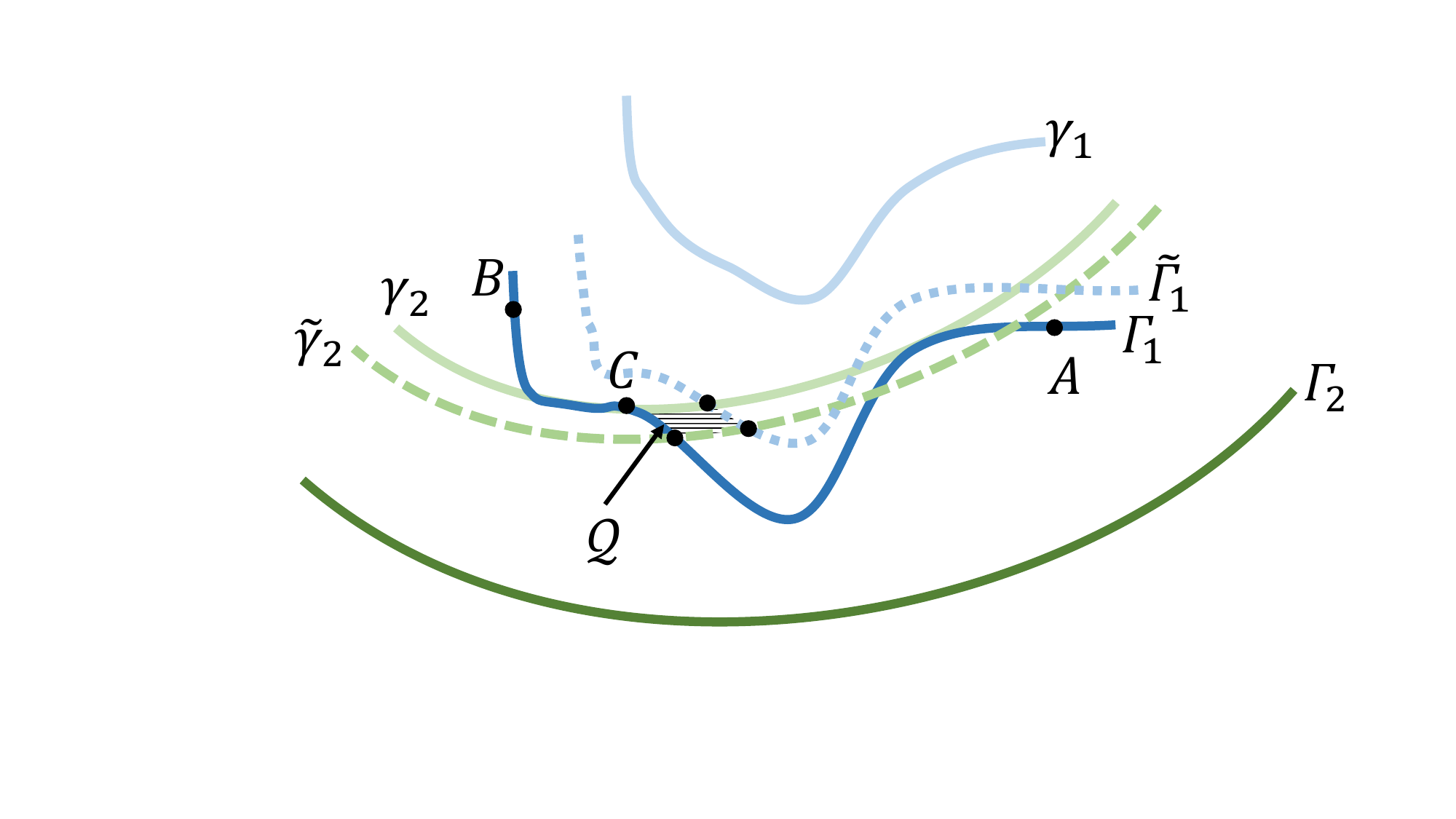}
		\caption{Case II, Subcase II: $\Gamma_1\cap \gamma_2 \neq \emptyset$. Solid curves denote portions of the orbits $\gamma_i,\Gamma_i$ $(i=1,2)$, whereas dotted curves and dashed curves denote portions of the orbits $\tilde{\Gamma}_1$ and $\tilde{\gamma}_2$, respectively.}
		\label{illustration4}
	\end{figure}
\end{proof}

\begin{proof}[Proof of \cref{MainTheorem1}]

	From \cref{lem1}, we assume that the four orbits defining the curvilinear quadrilateral $\mathcal{Q}$ are $\widetilde{\gamma_1}$, $\widetilde{\gamma_2}$, $\widetilde{\Gamma_1}$, and $\widetilde{\Gamma_2}$ and
	\begin{align}
		\mathcal{E}_1(\widetilde{\gamma_1})&=c_1,\\
		\mathcal{E}_2(\widetilde{\gamma_2})&=c_2,\\
		\mathcal{E}_1(\widetilde{\Gamma_1})&=C_1,\\
		\mathcal{E}_2(\widetilde{\Gamma_1})&=C_2.\\
	\end{align}
	
	We denote the edges and vertices of $\mathcal{Q}$ as illustrated in \cref{illustration5}:
	\begin{align}
		\mathcal{Q}^1 &:= \bar{\mathcal{Q}} \cap \widetilde{\Gamma_1},\\
		\mathcal{Q}^2 &:= \bar{\mathcal{Q}} \cap \widetilde{\Gamma_2},\\
		\mathcal{Q}^3 &:= \bar{\mathcal{Q}} \cap \widetilde{\gamma_1},\\
		\mathcal{Q}^4 &:= \bar{\mathcal{Q}} \cap \widetilde{\gamma_2},\\
		A &:= \mathcal{Q}^1 \cap \mathcal{Q}^4,\\
		B &:= \mathcal{Q}^1 \cap \mathcal{Q}^2,\\
		C &:= \mathcal{Q}^2 \cap \mathcal{Q}^3,\\
		D &:= \mathcal{Q}^3 \cap \mathcal{Q}^4,
	\end{align}
	where $\bar{\mathcal{Q}}$ denotes the closure of $\mathcal{Q}$.
	
	\begin{figure}
		\centering
		\includegraphics[width=0.8\textwidth]{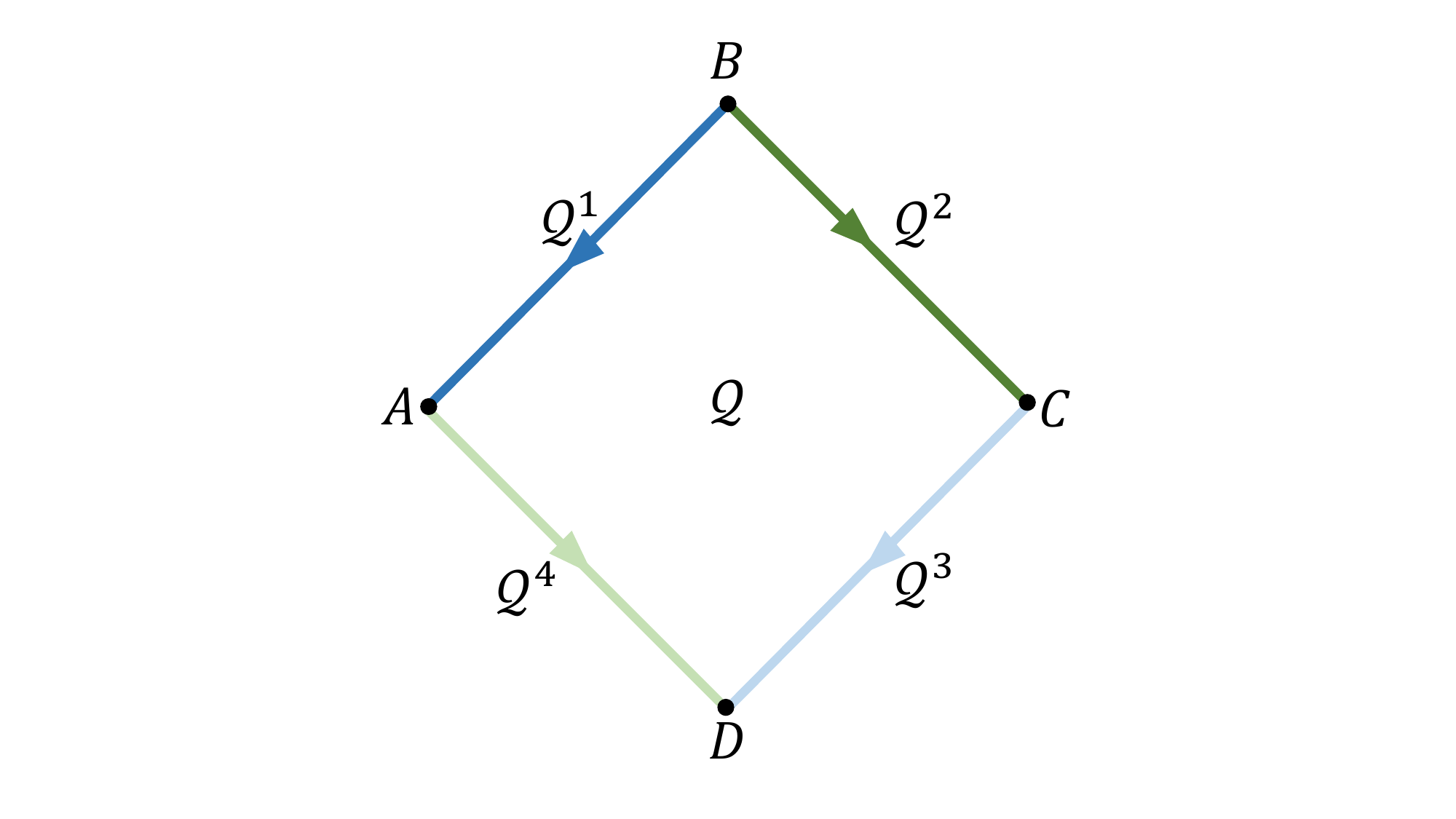}
		\caption{Edges and vertices of $\mathcal{Q}$. Arrows indicate the directions of the orbits.}
		\label{illustration5}
	\end{figure}

	In the following proof, we assume that, for each $i$ and $c\in (0,1)$, the period function $T_i(c)$ is strictly monotonically increasing with respect to $c$.

	Recall that within each period, the switching system \eqref{SwitchingSystem} evolves according to $\dot{x}=f_1(x)$ for a time interval of length $T_1$. 
	Consequently, the orbit starting from $D$ intersects $D$ $N_1$ times during the interval $[0,T_1]$, where
	\begin{equation}
		N_1 := \left\lfloor \frac{T_1}{T_1(c_1)} \right\rfloor.
	\end{equation}
	Similarly, the orbit starting from $A$ intersects $A$ $n_1$ times during $[0,T_1]$, where
	\begin{equation}
		n_1 := \left\lfloor \frac{T_1}{T_1(C_1)} \right\rfloor.
	\end{equation}
	
	Define
	\begin{equation}
		T_1^* := \frac{\, T_1(c_1)\, T_1(C_1)}{|T_1(c_1) - T_1(C_1)|}.
	\end{equation}
	Note that
	\begin{equation}
		\frac{5T_1^*}{T_1(c_1)}-\frac{5T_1^*}{T_1(C_1)}=5,
	\end{equation}
	implies that
	\begin{equation}
		\left\lfloor\frac{5T_1^*}{T_1(c_1)}\right\rfloor-\left\lfloor\frac{5T_1^*}{T_1(C_1)}\right\rfloor=5.
	\end{equation}
	It follows that whenever $T_1 \geq 5T_1^*$, we have
	\begin{equation}\label{Difference}
		N_1 - n_1 \geq 5.
	\end{equation}

	Consider the following two compact subsets of $\bar{\mathcal{Q}}$:
	\begin{align}
		\mathcal{Q}_1 &:= \bigl\{(x,y)\in\bar{\mathcal{Q}} : \frac{T_1}{\mathcal{T}_1(x,y)} \in [n_1+1, n_1+3] \bigr\},\\
		\mathcal{Q}_2 &:= \bigl\{(x,y)\in\bar{\mathcal{Q}} : \frac{T_1}{\mathcal{T}_1(x,y)} \in [N_1-2, N_1] \bigr\},
	\end{align}
	where $\mathcal{T}_i(x,y)$ denotes the period of the closed orbit of $\dot{x}=f_i(x)$ passing through $(x,y)$.  
	
	From \eqref{Difference}, it follows immediately that
	\begin{equation}
		\text{int}(\mathcal{Q}_1) \cap \text{int}(\mathcal{Q}_2) = \emptyset.
	\end{equation}
	
	We denote the edges of $\mathcal{Q}_1$ and $\mathcal{Q}_2$ as illustrated in \cref{illustration6}:
	\begin{align}
		\mathcal{Q}_1^1 &:= \big\{(x,y)\in\bar{\mathcal{Q}} : \frac{T_1}{\mathcal{T}_1(x,y)} = n_1+1\big\}, &
		\mathcal{Q}_1^2 &:= \mathcal{Q}_1 \cap \mathcal{Q}^2,\\
		\mathcal{Q}_1^3 &:= \big\{(x,y)\in\bar{\mathcal{Q}} : \frac{T_1}{\mathcal{T}_1(x,y)} = n_1+3\big\}, &
		\mathcal{Q}_1^4 &:= \mathcal{Q}_1 \cap \mathcal{Q}^4,\\
		\mathcal{Q}_2^1 &:= \big\{(x,y)\in\bar{\mathcal{Q}} : \frac{T_1}{\mathcal{T}_1(x,y)} = N_1-2\big\}, &
		\mathcal{Q}_2^2 &:= \mathcal{Q}_2 \cap \mathcal{Q}^2,\\
		\mathcal{Q}_2^3 &:= \big\{(x,y)\in\bar{\mathcal{Q}} : \frac{T_1}{\mathcal{T}_1(x,y)} = N_1\big\}, &
		\mathcal{Q}_2^4 &:= \mathcal{Q}_2 \cap \mathcal{Q}^4.
	\end{align}
	\begin{figure}
		\centering
		\includegraphics[width=0.8\textwidth]{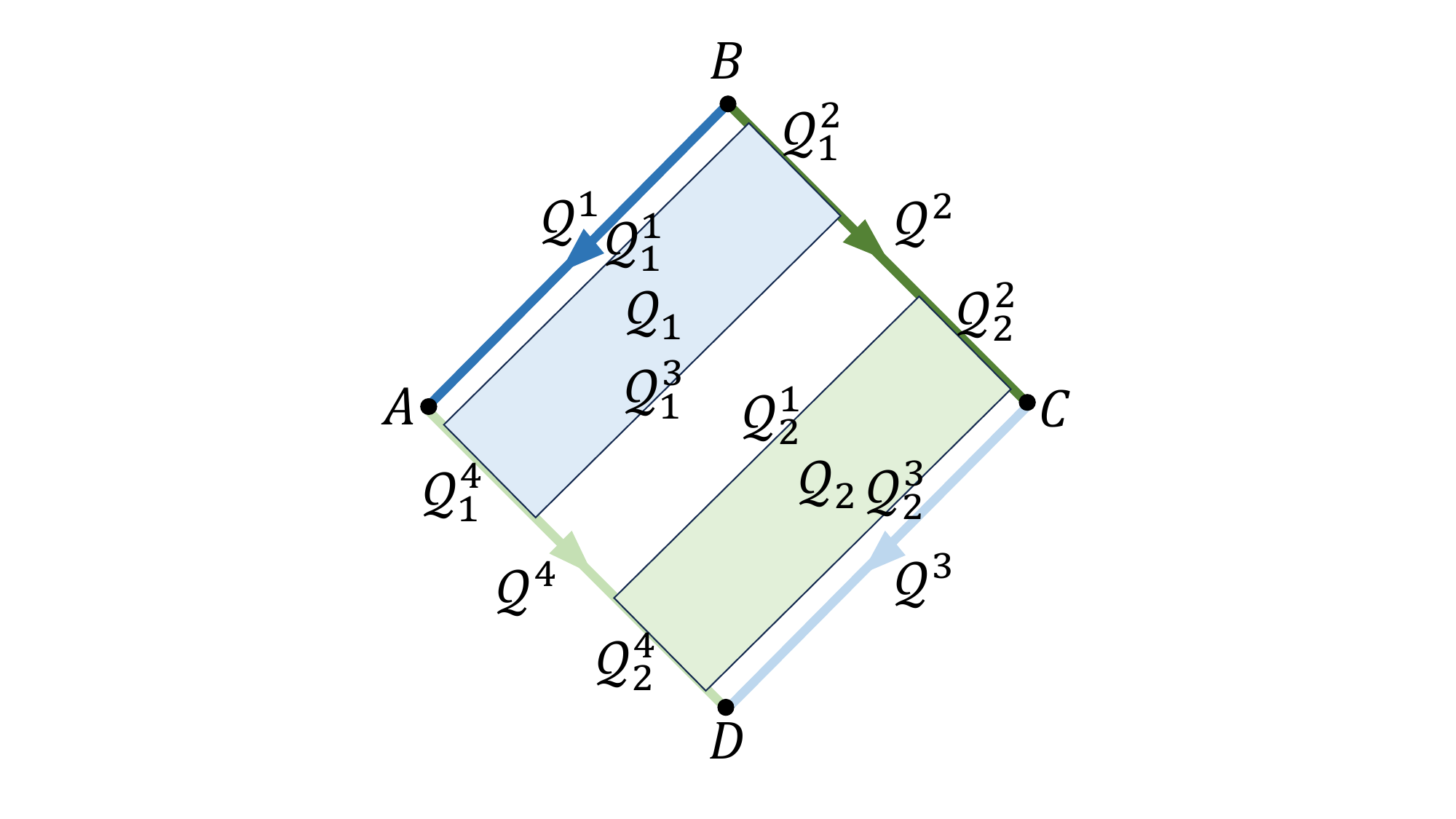}
		\caption{Edges of $\mathcal{Q}_1$ and $\mathcal{Q}_2$.}
		\label{illustration6}
	\end{figure}

	Next, define the ``semi''-Poincar\'e map
	\begin{equation}
		P_1 : (x_0,y_0) \mapsto \big(x_1(T_1),y_1(T_1)\big),
	\end{equation}
	where $\big(x_1(T_1),y_1(T_1)\big)$ denotes the solution of $\dot{x}=f_1(x)$ at time $T_1$ with initial condition $(x_0,y_0)$.
	
	The image of $\mathcal{Q}_1$ under $P_1$ exhibits a ``crossing'' behavior relative to $\mathcal{Q}_1$, although not in the classical sense of a topological horseshoe. More precisely, for any  path $l_1$ connecting $\mathcal{Q}_1^1$ and $\mathcal{Q}_1^3$, there exists a closed subinterval $[\alpha_1,\beta_1] \subset [0,1]$ such that $P_1\bigl(l_1([\alpha_1,\beta_1])\bigr)$ is contained in $\mathcal{Q}_1$ and
	connects $\mathcal{Q}_1^2$ and $\mathcal{Q}_1^4$.
	
	Since the image of $\mathcal{Q}_1$ under $P_1$ may ``corss'' itself multiple times, we define $\widetilde{\mathcal{Q}}_1$ to a connected component of the following set $U_1$:
	\begin{equation}
		U_1:= \bigcup \big\{P_1\big(l[\alpha,\beta]\big): l\subset\mathcal{Q}_1~ \text{connects}~ Q_1^1 ~\text{and}~ Q_1^3,~ P_1\big(l(\alpha)\big)\in Q_1^2,~ P_1\big(l(\beta)\big)\in Q_1^4~ s.t.~ P_1\big(l[\alpha,\beta]\big) \subset \mathcal{Q}_1\big\}.
	\end{equation}
	We denote the edges of $\widetilde{\mathcal{Q}}_1$  as illustrated in \cref{illustration7}:
	\begin{align}
		\widetilde{\mathcal{Q}}_1^1 &:= \widetilde{\mathcal{Q}}_1\cap P_1(\mathcal{Q}_1^4),\\
		\widetilde{\mathcal{Q}}_1^2 &:= \widetilde{\mathcal{Q}}_1\cap \mathcal{Q}_1^2,\\
		\widetilde{\mathcal{Q}}_1^3 &:=  \widetilde{\mathcal{Q}}_1\cap P_1(\mathcal{Q}_1^2),\\
		\widetilde{\mathcal{Q}}_1^4 &:= \widetilde{\mathcal{Q}}_1\cap \mathcal{Q}_1^4.
	\end{align}
	\begin{figure}
		\centering
		\includegraphics[width=0.8\textwidth]{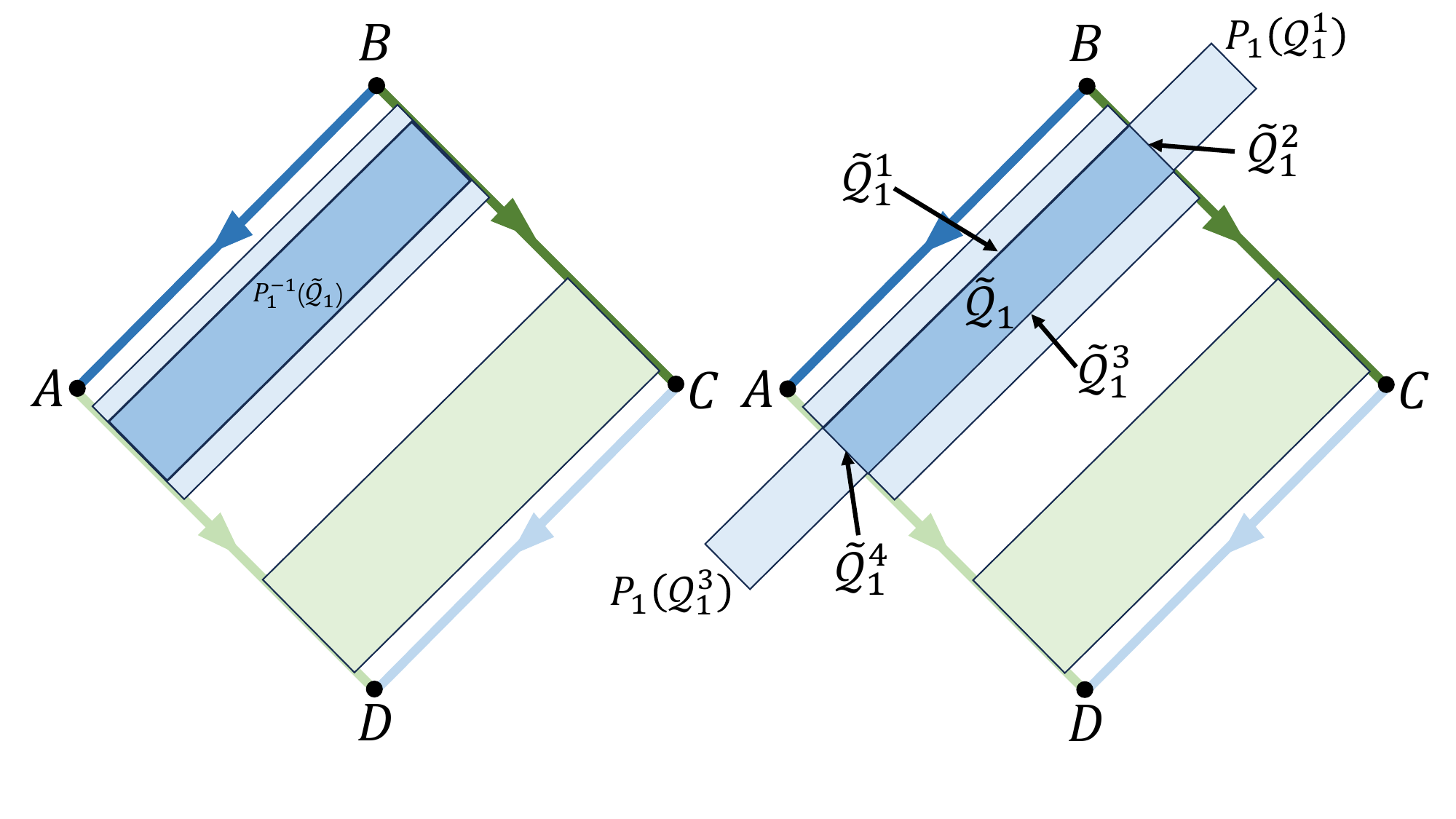}
		\caption{Preimage of $\tilde{\mathcal{Q}}_1$ and edges of $\tilde{\mathcal{Q}}_1$. The figure illustrates the case in which $P_1(\mathcal{Q}_1)$ ``crosses'' $\mathcal{Q}_1$ exactly once. (Left) Relative position of $P_1^{-1}(\tilde{\mathcal{Q}}_1)$ and $\tilde{\mathcal{Q}}_1$. (Right) $P_1(\mathcal{Q}_1)$ ``crosses'' $\mathcal{Q}_1$ once; in this case, $\tilde{\mathcal{Q}}_1 = P_1(\mathcal{Q}_1)\cap \mathcal{Q}_1$.}
		\label{illustration7}
	\end{figure}

	Next, set
	\begin{equation}
		T_2^* := \frac{\, T_2(c_2)\, T_2(C_2)}{|T_2(c_2)-T_2(C_2)|}.
	\end{equation}

	Define the ``semi''-Poincar\'e map associated with the second subsystem:
	\begin{equation}
		P_2 : (x_0,y_0) \mapsto \big(x_2(T_2), y_2(T_2)\big),
	\end{equation}
	where $\big(x_2(T_2),y_2(T_2)\big)$ is the solution of $\dot{x}=f_2(x)$ at time $T_2$ with initial condition $(x_0,y_0)$.
	
	It follows that, whenever $T_2 > 3T_2^*$, for any path $l_2$ connecting $\widetilde{\mathcal{Q}}_1^2$ and $\widetilde{\mathcal{Q}}_1^4$, there exists a closed subinterval $[\alpha_2,\beta_2]\subset[0,1]$ such that $P_2\bigl(l_2([\alpha_2,\beta_2])\bigr)$ is contained in $\mathcal{Q}$ and
	connects $\mathcal{Q}^1$ and $\mathcal{Q}^3$.  
	
	In other words, we have
	\begin{equation}
		P_2 (\widetilde{\mathcal{Q}}_1) \mapsto P_1^{-1}( \widetilde{\mathcal{Q}}_1).
	\end{equation}
	
	Define $\widetilde{\mathcal{Q}}_2$ to a connected component of the following set $U_2$:
	\begin{equation}
		U_2:= \bigcup \big\{P_2\big(l[\alpha,\beta]\big): l\subset\mathcal{Q}_2~ \text{connects}~ Q_2^1 ~\text{and}~ Q_2^3,~ P_2\big(l(\alpha)\big)\in Q_2^2,~ P_2\big(l(\beta)\big)\in Q_2^4~ s.t.~ P_2\big(l[\alpha,\beta]\big) \subset \mathcal{Q}_2\big\}.
	\end{equation}

	Similarly, for $i,j=1,2$, the following relation holds:
	\begin{equation}
		P_2 \circ P_1\big(P_1^{-1}(\widetilde{\mathcal{Q}_i})\big) \mapsto P_1^{-1}(\widetilde{\mathcal{Q}_j}).
	\end{equation}
	
	For reader's convenience, we provide illustrations in \cref{illustration11,illustration12} to demonstrate the formation and structure of the topological horseshoe.
	In \cref{illustration11}, for $i=1,2$
	\begin{align}
		\mathcal{Q}_i&=A_iB_iC_iD_i\\
		\widetilde{\mathcal{Q}_i}&=E_iF_iG_iH_i
	\end{align}

	\begin{figure}
		\centering
		\includegraphics[width=\textwidth]{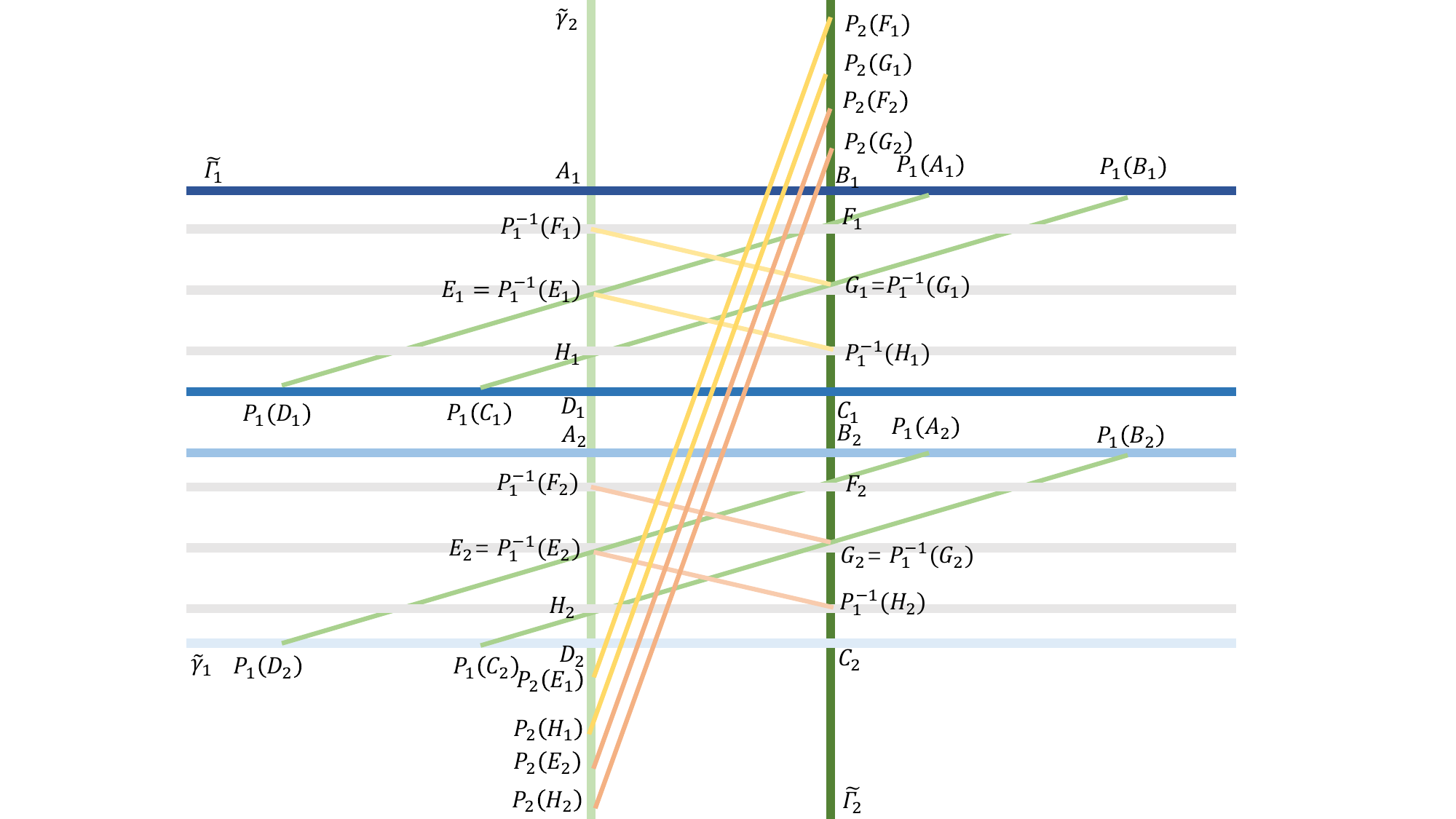}
		\caption{Illustration of the formation of the topological horseshoe. The figure illustrates the case in which $P_1(\mathcal{Q}_i)$ ``crosses'' $\mathcal{Q}_i~(i=1,2)$ exactly once. In this case, $P_1$ is the identity map when restricted to the orbits containing $E_i$ and $G_i$ $(i=1,2)$. Moreover, for each $i$, the points $E_i$ and $G_i$ lie on the same orbit.}
		\label{illustration11}
	\end{figure}

	\begin{figure}
		\centering
		\includegraphics[width=\textwidth]{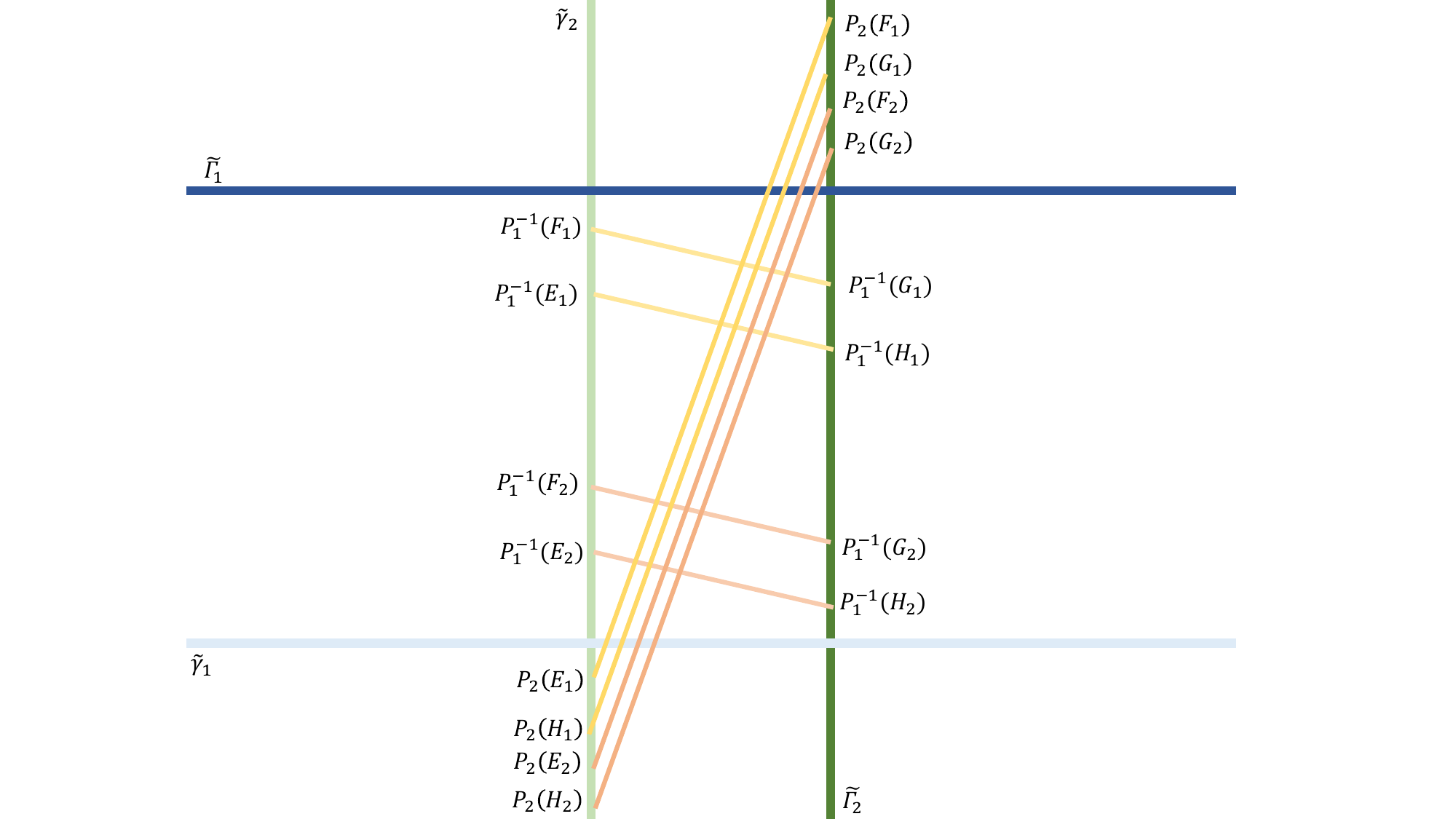}
		\caption{Illustration of the topological horseshoe:\\$P_2 \circ P_1\big(P_1^{-1}(E_iF_iG_iH_i)\big)\mapsto P_1^{-1}(E_jF_jG_jH_j),\quad i,j=1,2$.}
		\label{illustration12}
	\end{figure}
	
	It is noteworthy that this is a periodic system, so $P_1\circ P_2$ is also a Poincar\'e map. It follows that if 	\begin{equation}
		T_1 > 3T_1^* \quad \text{and} \quad T_2 > 5T_2^*,
	\end{equation}
	$P_1\circ P_2$ will exhibit a topological horseshoe.

	Therefore, the Poincar\'e map associated with the switching system has a topological horseshoe if there exist indices $i\neq j$ such that
	\begin{equation}
		T_i > 5T_i^* \quad \text{and} \quad T_j > 3T_j^*,
	\end{equation}
	where
	\begin{equation}
		T_k=\frac{T_k(c_k)T_k(C_k)}{|T_k(C_k)-T_k(c_k)|},\quad k=1,2.
	\end{equation}

\end{proof}		

\begin{remark}
	We do not require $\mathcal{Q}_1^3$ to be different from $\mathcal{Q}_2^1$. As can be seen from \cref{illustration11,illustration12}, even if $\mathcal{Q}_1^3=\mathcal{Q}_2^1$, we have
	\begin{equation}
		P_1^{-1}(E_1F_1G_1H_1)\cap P_1^{-1}(E_2F_2G_2H_2)=\emptyset.
	\end{equation}
\end{remark}

%\begin{remark}
%	Our results are inspired by the work in Ref.~\cite{Barrientos2017}. Instead of studying some specific model like in Ref.~\cite{Barrientos2017}, we discuss general conservative systems, and provide several  geometric conditions for the emergence of chaos. Besides, by conducting a more detailed investigation into the formation mechanism of the topological horseshoe, we provide a better estimation of the lower bounds for the duration of each system and point out that the lower bounds are not same, which is somewhat counterintuitive. 
%\end{remark}

\begin{remark}
	It can be verified that other types of monotonicity of the period functions $T_i(c)$ do not affect the validity of our result. The assumption that $T_i$ is strictly increasing is imposed solely for  convenience of the proof.
\end{remark}

\section{Chaos induced by periodic switching between non-isochronous planar Hamiltonian systems}\label{Hamiltonian_systems}

Before introducing our main results in this section, let us recall the definition of the period function in Hamiltonian system, which plays a crucial role in the conditions of our theorem.

\begin{definition}\cite{Chow1986,Chicone1987}
	Consider a planar Hamiltonian system
	\begin{equation}\label{PF}
		\dot{x} = f(x), \quad x \in \mathbb{R}^2,
	\end{equation}
	and suppose the system has a family of periodic orbits parameterized by a scalar quantity (typically an energy level or a first integral).

	Assume there exists a first integral \( H(x) \) such that each regular level set
	\begin{equation}
		\Gamma_h := \{ x \in \mathbb{R}^2 \mid H(x) = h \}
	\end{equation}
	is a closed orbit for \(h\) in some interval \(I\).
	
	The period function $T:I\to\mathbb{R}^+$ assigns to each periodic orbit its minimal period. More precisely,
	\begin{equation}
		T(h)=\inf\{t>0:\phi(t,x_0)=x_0,H(x_0)=h\},
	\end{equation}
	where $\phi(t,x)$ denotes the flow associated with \eqref{PF}.
\end{definition}

\subsection{General planar Hamiltonian system}

Consider the following  planar autonomous Hamiltonian system 
\begin{equation}\label{Hamiltonian}
	\begin{cases}
		\dot{x}=H_y(x,y),\\
		\dot{y}=-H_x(x,y).
	\end{cases}
\end{equation}

In a neighborhood of the center, the period function can be characterized as the derivative of the area enclosed by a periodic orbit with respect to the energy level; see Ref.~\cite{Manosas2002}.

\begin{lemma}\cite{Manosas2002}\label{ManosasThmA} 
	Let $H$ be an analytic function with $H(0,0)=0$ and assume that the Hamiltonian system \eqref{Hamiltonian} has a nondegenerate center at the origin. Then, in a neighborhood of $h=0$, the functions $A$ and $T$ are analytic and satisfy the relation 
	\begin{equation}
		A'(h)=T(h),
	\end{equation}
	where $T(h)$ is the period function, and $A(h)$ gives the area of the region surrounded by $\gamma_h$.
\end{lemma}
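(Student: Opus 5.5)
Our plan is to prove the relation $A'(h)=T(h)$ by writing the area as a double integral over the region enclosed by $\gamma_h$ and changing variables to energy--time coordinates $(H,t)$. The analyticity claims will come from the Morse lemma applied at the nondegenerate center.

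\textbf{Step 1: choice of coordinates and orbit structure.} Since the origin is a nondegenerate center of the analytic Hamiltonian system \eqref{Hamiltonian}, the Hessian of $H$ at $(0,0)$ is definite. Replacing $H$ by $-H$ if necessary (which reverses time but leaves $|T|$ unchanged), we may take it positive definite. By the analytic Morse lemma there is an analytic, orientation-preserving local diffeomorphism $\Phi$ with $H\circ\Phi(u,v)=u^2+v^2$. Hence for small $h>0$ the level set $\gamma_h=\{H=h\}$ is an analytic closed curve without equilibria. The punctured neighbourhood is foliated by these curves, and each is a periodic orbit of minimal period $T(h)$.

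\textbf{Step 2: area as a time--energy integral.} Fix a transversal section $\Sigma$, for instance the ray $\{\Phi(r,0): r>0\}$, parametrised by energy $\eta$. Consider the map $\Psi(\eta,t)=\phi(t,\sigma(\eta))$, where $\sigma(\eta)\in\Sigma$ is the point with $H=\eta$. Its Jacobian determinant is $\det(\partial_\eta\Psi,\partial_t\Psi)$. The second column is the Hamiltonian vector field $X_H=(H_y,-H_x)$.

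The key identity is that $\nabla H\cdot\partial_\eta\Psi=1$, because $H(\Psi(\eta,t))=\eta$. Combining this with $\det(w,X_H)=-\nabla H\cdot w$, or its negative depending on orientation convention, the Jacobian has absolute value $1$. The flow is symplectic, so $\Psi$ is area-preserving in the $(\eta,t)$ variables.

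Therefore
\begin{equation*}
A(h)=\int_0^h\int_0^{T(\eta)}dt\,d\eta=\int_0^h T(\eta)\,d\eta,
\end{equation*}
where we use that $\Psi$ is a bijection from $\{(\eta,t):0<\eta<h,\ 0\le t<T(\eta)\}$ onto the punctured disk enclosed by $\gamma_h$. Continuity of $T$ (Step 3) then gives $A'(h)=T(h)$ by the fundamental theorem of calculus.

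\textbf{Step 3: analyticity, the main obstacle.} This is the delicate point, because $T$ is naturally defined only for $h>0$ and must extend analytically across $h=0$. We would handle it as follows.
\begin{itemize}
\item Write $A(h)=\operatorname{area}(\Phi(\{u^2+v^2\le h\}))$.
\item Pass to polar coordinates, $A(h)=\int_0^{2\pi}\int_0^{\sqrt h} J(\rho\cos\theta,\rho\sin\theta)\,\rho\,d\rho\,d\theta$, where $J=\det D\Phi$ is analytic.
\item Expand $J$ in a power series. Integrating over $\theta$ kills all odd-degree monomials, so only even powers $\rho^{2k}$ survive.
\item This shows $A(h)$ is an analytic function of $h$ itself, not merely of $\sqrt h$, near $0$, with $A(0)=0$.
\end{itemize}
Then $T=A'$ is analytic as well, with $T(0)=\pi J(0)\cdot$const. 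This equals $2\pi/\sqrt{\det \mathrm{Hess}\,H(0)}$, the linearised period, which gives a consistent extension to $h=0$.

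The parity argument in the $\theta$-integration is the step we expect to require the most care. In particular, we must check that the same Morse chart $\Phi$ works uniformly and that the termwise integration of the power series is justified on a small disk.
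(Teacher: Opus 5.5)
The paper gives no proof of this lemma; it quotes it from \cite{Manosas2002}, a work on area-preserving normalizations of centers. The natural argument in that setting uses an analytic \emph{area-preserving} chart with $H\circ\Phi(u,v)=f(u^2+v^2)$, $f(0)=0$, $f'(0)\neq 0$. Both claims can then be read off at once: $A(h)=\pi f^{-1}(h)$ and $T(h)=\pi/f'(f^{-1}(h))=A'(h)$.

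Your route is genuinely different and more elementary.
\begin{itemize}
\item You need only the ordinary analytic Morse lemma.
\item You get the identity from energy--time coordinates. The Jacobian of $\Psi$ is $\pm1$ simply because $\nabla H\cdot\partial_\eta\Psi=1$ and $\partial_t\Psi=X_H$, so symplecticity of the flow is not even needed.
\item You get analyticity of $A$ from the parity argument, which is sound: the odd-degree homogeneous parts of $J$ integrate to zero over $[0,2\pi]$, and uniform convergence on a small disk justifies termwise integration.
\end{itemize}
Your unspecified constant is $1$: $T(0)=\pi J(0)=2\pi/\sqrt{\det\mathrm{Hess}\,H(0)}$. Your route also buys more than the local statement. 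The energy--time computation is not tied to the center: with a section parametrized by $H$ (e.g.\ a normalized gradient line) it gives $\operatorname{area}\{h_1<H<h_2\}=\int_{h_1}^{h_2}T(\eta)\,d\eta$ on the whole period annulus. That is what the paper actually uses in \cref{MainTheorem2}, where the levels $c_i<c<C_i$ need not lie near the center. What your route does not give is the normal form itself.

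Two points need repair.

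\textbf{Circularity between Steps 2 and 3.} You invoke continuity of $T$ ``from Step 3'' to apply the fundamental theorem of calculus. But Step 3 obtains regularity of $T$ only through $T=A'$. From $A(h)=\int_0^hT$ and analyticity of $A$ you get $T=A'$ only almost everywhere. You also need measurability of $T$ for the change of variables. Establish continuity of $T$ on $(0,\varepsilon)$ independently. For instance, in the Morse chart the pulled-back field is tangent to the circles $u^2+v^2=\eta$ and nonvanishing there. Hence $\dot\theta=\omega(\rho,\theta)\neq0$ with $\omega$ analytic for $\rho>0$, and $T(\eta)=\int_0^{2\pi}d\theta/|\omega(\sqrt\eta,\theta)|$ is continuous. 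Alternatively, use the implicit function theorem for the return time to $\Sigma$.

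\textbf{The reduction $H\mapsto -H$.} This replacement is not neutral for the identity. At a local maximum the levels are $h<0$ and the area grows as $h$ decreases, so your argument gives $A'(h)=-T(h)$. For example, $H=-(x^2+y^2)/2$ has $A(h)=-2\pi h$ and $T\equiv2\pi$. So the lemma tacitly assumes the center is a minimum of $H$, or should read $|A'(h)|=T(h)$. This is harmless for the paper, which only uses $A''\neq0$.
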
 

According to \cref{ManosasThmA}, in a neighborhood of the center, we have
\begin{equation}
	T'(h)=A''(h).
\end{equation}

Consider the $T$-periodic switching system in $\mathbb{R}^2$, 
\begin{equation}\label{SwitchingSystem2}
	\begin{cases}
		\dot{x}&=\begin{cases}
			\frac{\partial}{\partial y} H_1(x,y),\qquad \quad nT&\leq t<nT+T_1,\\
			\frac{\partial}{\partial y} H_2(x,y),\qquad nT+T_1&\leq t<nT+T_1+T_2,
		\end{cases}\\
		\dot{y}&=\begin{cases}
			-\frac{\partial}{\partial x} H_1(x,y),\qquad \quad nT&\leq t<nT+T_1,\\
			-\frac{\partial}{\partial x} H_2(x,y),\qquad nT+T_1&\leq t<nT+T_1+T_2,\\
		\end{cases}
	\end{cases}
\end{equation}
where $T_1+T_2=T$ and $H_i(x,y)$ is analytic for $i=1,2$.
$T_i$ is the dwell time associated with the Hamiltonian subsystems:
\begin{equation}\label{SwitchingSystem2_1}
	\begin{cases}
		\dot{x}&=\frac{\partial}{\partial y} H_i(x,y),\\
		\dot{y}&=-\frac{\partial}{\partial x} H_i(x,y).
	\end{cases}
\end{equation}

Similar to \cref{hypothesis1}, we propose the following hypothesis.
\begin{Hypothesis}\label{hypothesis3}
	For each $i=1,2$, denote by $T_i(c)$ the period of the closed orbit of \eqref{SwitchingSystem2} corresponding to the Hamiltonian value $c$. Suppose that, for each $i$, there exist two closed orbits $\gamma_i$ and $\Gamma_i$ associated with the Hamiltonian values $c_i$ and $C_i$, respectively, with $c_i<C_i$. Assume that $\Gamma_i$ contains exactly one fixed point of \eqref{SwitchingSystem2_1} in its interior.
\end{Hypothesis}

\begin{notation}
	Let
	\begin{equation}
		\mathcal{R}_i=\big\{(x,y)\in\mathbb{R}^2 : c_i<H_i(x,y)<C_i\big\}
	\end{equation}
	denote the open annular region bounded by $\gamma_i$ and $\Gamma_i$.
\end{notation}

\begin{theorem}\label{MainTheorem2}	
	Assume that \cref{hypothesis3} holds. Suppose further that the following  conditions are satisfied:
	\begin{enumerate}[label=(\roman*)]
		\item There exist indices $i\neq j$ such that either
		$\gamma_i\cap\mathcal{R}_j\neq\emptyset$ and
		$\gamma_i\not\subset\mathcal{R}_j$, or
		$\Gamma_i\cap\mathcal{R}_j\neq\emptyset$ and
		$\Gamma_i\not\subset\mathcal{R}_j$. 
		\item \begin{equation}
			A_i''(c)\neq 0, \quad c_i<c<C_i,\quad i=1,2
		\end{equation} 
		where $A_i(c)$ gives the area of the region surrounded by the closed orbit of \eqref{SwitchingSystem2_1} which is corresponding to the energy level $c$.
	\end{enumerate}
	Then, there exist constants $T_1^*>0$ and $T_2^*>0$ such that, whenever
	\begin{equation}\label{LowerBound}
		T_i \ge 5T_i^*, \quad T_j \ge 3T_j^*, \quad i,j\in{1,2},\ i\neq j,
	\end{equation}
	the time-$T$ Poincar\'e map associated with the switching system \eqref{SwitchingSystem2} exhibits a topological horseshoe, where $T=T_1+T_2$.
\end{theorem}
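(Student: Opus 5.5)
The plan is to reduce \cref{MainTheorem2} to \cref{MainTheorem1}. We check that the Hamiltonian switching system \eqref{SwitchingSystem2} satisfies \cref{hypothesis1} and \cref{hypothesis2}, and that Condition~(ii) on $A_i''$ implies Condition~(ii) of \cref{MainTheorem1}. Condition~(i) is word-for-word the same in both theorems, so nothing needs to be done there. \cref{hypothesis2} is immediate: $H_i$ is analytic, so the vector field $(\partial_y H_i,-\partial_x H_i)$ is $C^1$ and hence locally Lipschitz on $\mathcal{R}_i$. For \cref{hypothesis1}, \cref{hypothesis3} provides the closed orbits $\gamma_i\subset\{H_i=c_i\}$ and $\Gamma_i\subset\{H_i=C_i\}$, with exactly one equilibrium inside $\Gamma_i$. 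We then argue, by index theory, that this equilibrium is a center and that the open annulus $\mathcal{R}_i=\{c_i<H_i<C_i\}$ is foliated by the closed level curves $\{H_i=c\}$, $c\in(c_i,C_i)$, which are nested around the center. On this annulus there are no equilibria, and a regular compact level set of a first integral must be a periodic orbit. Hence the family of closed orbits required in \cref{hypothesis1} is $\{\gamma_c\}_{c\in[c_i,C_i]}$.

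Next we relate the two parametrizations. The energy level $h\mapsto A_i(h)$ and the normalized area parameter $\mathcal{E}_i$ of \cref{MainTheorem1} are linked by $\mathcal{E}_i(\gamma_h)=(A_i(h)-A_i(c_i))/(A_i(C_i)-A_i(c_i))$. For this to be a valid reparametrization, $A_i$ must be strictly monotone on $[c_i,C_i]$. This follows from the identity $A_i'(h)=T_i(h)>0$. We use \cref{ManosasThmA} for this identity, extended from a neighborhood of the center to the whole period annulus. The extension can be justified directly: differentiate the area in $h$ using the coarea formula, $A'(h)=\oint_{\gamma_h} \frac{ds}{|\nabla H|}$, and note that the right-hand side is exactly the period, because the speed along the orbit equals $|\nabla H|$. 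Alternatively, use analytic continuation along the annulus. This extension is the step I expect to need the most care, since \cref{ManosasThmA} as stated is local near the center. I would give the coarea argument explicitly, as it works for any regular period annulus without requiring analyticity.

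With $A_i'=T_i$ in hand, the period, regarded as a function of the energy, is $C^1$ with derivative $A_i''(h)\neq0$ on $(c_i,C_i)$. Since $A_i''$ is continuous and nonvanishing, it has a constant sign, so $T_i(h)$ is strictly monotone. The composition of $T_i$ with the strictly increasing homeomorphism $c\mapsto A_i^{-1}(A_i(c_i)+c(A_i(C_i)-A_i(c_i)))$ from $[0,1]$ to $[c_i,C_i]$ is therefore continuous and strictly monotone on $(0,1)$. This is Condition~(ii) of \cref{MainTheorem1}.

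All hypotheses of \cref{MainTheorem1} are now verified, so it yields constants $T_1^*,T_2^*$, given explicitly by
\begin{equation*}
T_k^*=\frac{T_k(c_k')\,T_k(C_k')}{|T_k(c_k')-T_k(C_k')|},
\end{equation*}
where $c_k'$ and $C_k'$ are the energy levels of the edges of the quadrilateral $\mathcal{Q}$ from \cref{lem1}. Under \eqref{LowerBound}, the time-$T$ Poincar\'e map then has a topological horseshoe, which is the conclusion of \cref{MainTheorem2}.
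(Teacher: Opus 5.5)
Your proposal is correct and takes the route the paper implicitly takes (the paper gives no separate proof of this theorem). That route is: use $A_i'=T_i$ to turn $A_i''\neq 0$ into strict monotonicity of the period function, reparametrize from energy to normalized area, and invoke \cref{MainTheorem1}. Your coarea argument for $A_i'(h)=T_i(h)$ on the whole annulus actually improves on the paper, which only cites \cref{ManosasThmA}; as stated, that lemma is local to a neighborhood of a nondegenerate center.
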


\begin{remark}
	The constants $T_k^*~(k=1,2)$ in \eqref{LowerBound} are defined by
	\begin{equation}
		T_k=\frac{T_k(\tilde{c}_k)T_k(\tilde{C}_k)}{|T_k(\tilde{C}_k)-T_k(\tilde{c}_k)|},\quad k=1,2.
	\end{equation}
	Here, \( \tilde{c}_k \) and \( \tilde{C}_k \) \((k=1,2)\) denote the energy levels (Hamiltonian values) corresponding to four closed orbits that form a curvilinear quadrilateral , and satisfy
	\begin{equation}
		c_k \le \tilde{c}_k < \tilde{C}_k \le C_k\quad (k=1,2).
	\end{equation}
\end{remark}

The condition of the theorem above may be hard to checked analytically, but can be verified through numerical methods. Fortunately, for one-degree-of-freedom mechanical systems, also referred to as planar Newtonian Hamiltonian systems, the period--energy relationship has been extensively studied, and a well-developed theoretical framework is available to address this issue.

\subsection{ One-degree-of-freedom mechanical system (Newtonian Hamiltonian system)}

Several sufficient criteria ensuring the monotonicity of the period function have been established.
Among them, the result of Chow and Wang plays a fundamental role in the study of monotonicity of the energy-period function \cite{Chow1986}.

Consider the scalar second-order differential equation
\begin{equation}\label{Chow1_1}
	\ddot{x}+g(x)=0
\end{equation} 
where $g$ is smooth for all $x\in\mathbb{R}$.
Equation \eqref{Chow1_1} is equivalent to the Newtonian Hamiltonian system
\begin{equation}
	\begin{cases}
		\dot{x}&=y,\\
		\dot{y}&=-g(x).
	\end{cases}
\end{equation}
with Hamiltonian
\begin{equation}
	H(x,y)=\frac{1}{2}y^2+G(x),
\end{equation}
where
\begin{equation}
	G(x)=\int_0^x g(\xi)\text{d}\xi.
\end{equation}

Assume that there exist $a<0<b$ such that 
\begin{equation}
	G(a)=G(b)=c,~G(x)<c~\text{for~all}~a<x<b,
\end{equation}
and $g(a)g(b)\neq 0$.

Then the energy level set $\{H=c\}$ contains a periodic orbit in the phase plane, intersecting the $x$-axis at $(a,0)$ and $(b,0)$.
Denote by $T(c)$ the period function.

When restricting attention to a single Newtonian Hamiltonian system, it is natural to assume that
\begin{equation}
	g(0)=0,
\end{equation}
and the periodic orbits contain only one fixed point in their interiors.

We recall the following hypothesis introduced in Ref. \cite{Chow1986}.
\begin{Hypothesis}\cite{Chow1986}\label{H1}
	There exist $-\infty\leq a^* < 0 < b^*\leq +\infty$, an integer $N\geq 0$ and a positive smooth function $h(x)$ such that 
	\begin{equation}\label{2_1}
		g(x)=x^{2N+1}h(x),\qquad a^*<x<b^*
	\end{equation}
	and
	\begin{equation}
		0<G(a^*)=G(b^*)=c^*\leq +\infty.
	\end{equation}
\end{Hypothesis}
As observed in Ref.~\cite{Chouikha1999}, condition \eqref{2_1} is equivalent to
\begin{equation}
	xg(x)>0,\qquad x\in(a^*,b^*)\backslash\{0\}.
\end{equation}

Under \cref{H1}, the following result provides a criterion  for the monotonicity of the period function.
\begin{lemma}\cite{Chow1986}\label{Chow}
	Suppose \cref{H1} holds. If $g'(0)>0$ and 
	\begin{equation}\label{Chow_eq1}
		H(x)=g(x)^2+\frac{g''(x)}{3g'(0)^2}g(x)^3-2G(x)g'(x),
	\end{equation}
	satisfies
	\begin{equation}
		H(x)>0~(\text{respectively} <0), \quad x\neq0,\quad a^*<x<b^*,
	\end{equation}
	then the period function $T(c)$ is strictly increasing (respectively strictly decreasing) on $(0,c^*)$, that is,
	\begin{equation}
		T'(c)>0 \quad(\text{respectively} <0),\quad 0<c<c^*.
	\end{equation}
\end{lemma}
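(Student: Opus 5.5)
The plan is to follow the classical Chow–Wang strategy: write the period as an integral in which the energy $c$ enters only through the range of a parameter, differentiate in $c$, and then show that the integrand can be rearranged so that its sign is governed by the function $H(x)$ of \eqref{Chow_eq1}.

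First, fix a periodic orbit $\{\tfrac12 y^2+G(x)=c\}$ with $0<c<c^*$. By \cref{H1} and $g'(0)>0$ (so $N=0$), $G$ is strictly increasing on $(0,b^*)$, strictly decreasing on $(a^*,0)$, and $G(x)\sim \tfrac12 g'(0)x^2$ near $0$. I will introduce the smooth change of variables $u=\operatorname{sgn}(x)\sqrt{2G(x)}$, with inverse $x=X(u)$ satisfying $X'(u)=u/g(X(u))>0$. Then
\[
T(c)=\int_{a}^{b}\frac{2\,dx}{\sqrt{2(c-G(x))}}=\int_{-\sqrt{2c}}^{\sqrt{2c}}\frac{2X'(u)\,du}{\sqrt{2c-u^2}} .
\]
Substituting $u=\sqrt{2c}\sin\theta$ gives
\[
T(c)=2\int_{-\pi/2}^{\pi/2}X'\bigl(\sqrt{2c}\sin\theta\bigr)\,d\theta .
\]
This is smooth in $c$ and can be differentiated under the integral sign:
\[
T'(c)=\frac{2}{\sqrt{2c}}\int_{-\pi/2}^{\pi/2}X''\bigl(\sqrt{2c}\sin\theta\bigr)\sin\theta\,d\theta .
\]
A direct computation from $X'=u/g$ and $u^2=2G$ gives
\[
X''(u)=\frac{g(x)^2-2G(x)g'(x)}{g(x)^3}\Big|_{x=X(u)} .
\]
So the numerator $g^2-2Gg'$ is exactly the first and third terms of $H$.

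The heart of the argument is converting this into an orbital integral whose integrand has the sign of $H$. I will rewrite $T'(c)$ as an integral over the orbit in time $t$, using $dt=dx/y$. I will then add an exact time-derivative of the form $\frac{d}{dt}\bigl(y^{3}\varphi(x)\bigr)=y^4\varphi'(x)-3y^2g(x)\varphi(x)$, whose integral over a closed orbit vanishes. I will choose $\varphi$, built from $g''/g'(0)^2$ and powers of $g$, to absorb the middle term $\tfrac{g''}{3g'(0)^2}g^3$ of $H$. The coefficient $\tfrac13$ should be forced by the factor $3$ in this identity, and after the addition the integrand should become $H(x)$ times a positive weight.

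The point of the correction term is that $g^2-2Gg'$ alone vanishes to fourth order at $x=0$ with no definite sign. The cubic correction fixes the sign near the center, and this matches the normalization by $g'(0)^2$.

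Once the identity $T'(c)=\oint_{\gamma_c}H(x)\,w(x,y)\,dt$ with $w>0$ is established, the conclusion follows at once. If $H>0$ on $(a^*,b^*)\setminus\{0\}$, then $T'(c)>0$ for all $0<c<c^*$, and symmetrically $H<0$ gives $T'(c)<0$.

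I expect the main obstacle to be this middle step: finding the right $\varphi$ and verifying that the weight $w$ stays positive and integrable up to the turning points $x=a,b$, where $y=0$. I would first try $\varphi=g''/(g'(0)^2g^{k})$ for a suitable exponent $k$ and check the boundary behavior using $g(a)g(b)\neq0$. If that fails, I will fall back on the pairing approach: split the $\theta$-integral into $\pm\theta$, compare $X''(s)$ with $X''(-s)$, and integrate by parts once in $\theta$ to generate the $g''$ term.
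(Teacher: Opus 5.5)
There is a genuine gap at the step you call the heart of the argument, and it cannot be closed. With the variable coefficient $g''(x)$ the statement is false, so no choice of $\varphi$, and no $\pm\theta$ pairing, can give $T'(c)=\oint H\,w\,dt$ with $w>0$. Take $g(x)=x+x^3$. Then \cref{H1} holds with $N=0$, $h=1+x^2$, $a^*=-\infty$, $b^*=c^*=+\infty$, and $g'(0)=1$, while
\[
H(x)=(x+x^3)^2+2x(x+x^3)^3-\bigl(x^2+\tfrac12x^4\bigr)(1+3x^2)=\tfrac12x^4+\tfrac{11}{2}x^6+6x^8+2x^{10}>0 .
\]
Yet this hardening Duffing oscillator has strictly decreasing period: $T\to2\pi$ as $c\to0$ and $T\to0$ as $c\to\infty$. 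More generally, for $g=x+ax^2+bx^3$ the printed $H$ has leading term $\tfrac16(10a^2+3b)x^4$, whereas the Lindstedt sign of $T'$ near the centre is that of $10a^2-9b$. The paper gives no proof of this lemma; it only quotes Chow--Wang, and the $g''(x)$ is evidently a mis-transcription of $g''(0)$.

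Your own computation already contains the correct argument. Undoing $u=\sqrt{2c}\sin\theta$ and using $du=g\,dx/u$ in your formula gives
\[
cT'(c)=\frac12\oint_{\gamma_c}\frac{g^2-2Gg'}{g^2}\,dt .
\]
Your remark about the order of vanishing is wrong: $g^2-2Gg'=-\tfrac13g'(0)g''(0)x^3+O(x^4)$ vanishes only to third order. It is this odd cubic term that prevents a sign. The exact derivative that removes it is simply $\tfrac{d}{dt}y=-g(x)$, so $\oint g\,dt=0$; you do not need $\tfrac{d}{dt}(y^3\varphi)$. Adding $\tfrac{g''(0)}{6g'(0)^2}\oint g\,dt=0$ gives
\[
cT'(c)=\frac12\oint_{\gamma_c}\frac{\widehat H(x)}{g(x)^2}\,dt,\qquad
\widehat H=g^2+\frac{g''(0)}{3g'(0)^2}\,g^3-2Gg' .
\]
Since $\widehat H/g^2=O(x^2)$ is continuous, there is no difficulty at $x=0$ or at the turning points, and this proves the lemma with $g''(0)$ in place of $g''(x)$. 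The constant $\tfrac13$ comes from cancelling the cubic term, not from the factor $3$ in $\tfrac{d}{dt}(y^3\varphi)$. For the counterexample, $g''(0)=0$, so $\widehat H=g^2-2Gg'=-\tfrac32x^4-\tfrac12x^6<0$, and the same identity confirms $T'<0$.
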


%%%%%%%%%%%%%%%%%%%%%%%%%%%%%%%%%%%%%%%%%%%%%%%%%%%%%%%%%%%%%%%%%%%%%%%

When dealing with the switching system, it is unnecessary to require that the two Newtonian Hamiltonian subsystems share a common center. Accordingly, \cref{H1} and \cref{Chow} must be reformulated so as to accommodate the situation in which the critical point associated with the period function is located at an arbitrary position. Also, as can be seen from \eqref{Chow_eq1}, $g\in C^2(\mathbb{R})$ is enough for our result.

%%%%%%%%%%%%%%%%%%%%%%%%%%%%%%%%%%%%%%%%%%%%%%%%%%%%%%%%%%%%%%%%%%%%%%%

Consider the pair of scalar equations
\begin{equation}
	\ddot{x}+g_i(x)=0,\quad i=1,2,
\end{equation}
where $g_i\in C^2(\mathbb{R})$.
Suppose that, for each $i$, there exists exactly one critical point in the interior of the periodic orbits of the system, denoted by $({\bf x}_i,0)$; 
that is,
\begin{equation}
	g_i({\bf x}_i)=0,\quad i=1,2.
\end{equation}
Define
\begin{equation}
	G_i(x)=\int_{{\bf x}_i}^x g(\xi) \text{d} \xi.
\end{equation}

Similar to \cref{H1}, we propose the following the hypothesis.
\begin{Hypothesis}\label{H2}
	For each $i=1,2$, there exist $-\infty\leq \alpha_i < {\bf x}_i < \beta_i\leq +\infty$, such that 
	\begin{equation}
		(x-{\bf x}_i)g_i(x)>0,\quad x\in(\alpha_i,\beta_i)\backslash \{{\bf x}_i\},
	\end{equation}
	and
	\begin{equation}
		0<G(\alpha_i)=G(\beta_i)\leq +\infty.
	\end{equation}
\end{Hypothesis}

As a direct consequence of \cref{Chow}, applying the translation 
\begin{equation}
	y=x-{\bf x}_i,
\end{equation}
%which shifts the equilibrium to the origin and reduces the system to a standard form, 
we obtain the following corollary.
\begin{cor}\label{cor3}
	Suppose \cref{H2} holds. If, for each $i=1,2$, $g_i'({\bf x}_i)>0 $ and 
	\begin{equation}\label{ChowCondition}
		H_i(x)=g_i(x)^2+\frac{g_i''(x)}{3g_i'({\bf x}_i)^2}g_i(x)^3-2G_i(x)g_i'(x)>0\quad(\text{or} <0), 
	\end{equation}
	satisfies
	\begin{equation}
		H_i(x)>0~(\text{respectively} <0),x\neq {\bf x}_i, \alpha_i<x<\beta_i,
	\end{equation}
	then the corresponding period function $T_i(c)$ is strictly increasing (respectively strictly decreasing) on $(0,c_i^*)$, that is,
	\begin{equation}
		T_i'(c)>0 \quad(\text{or}  <0),\quad 0<c<c^*.
	\end{equation}
\end{cor}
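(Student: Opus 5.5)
The plan is to reduce \cref{cor3} to \cref{Chow} by translating each equilibrium to the origin, one subsystem at a time. Fix $i\in\{1,2\}$ and set $y=x-{\bf x}_i$. Define
\begin{equation}
	\tilde g_i(y):=g_i(y+{\bf x}_i),\qquad \tilde G_i(y):=\int_0^y \tilde g_i(\eta)\,\text{d}\eta=G_i(y+{\bf x}_i).
\end{equation}
Then $\ddot x+g_i(x)=0$ becomes $\ddot y+\tilde g_i(y)=0$, and its phase portrait is the original one shifted horizontally by ${\bf x}_i$. In particular, closed orbits and their energy levels $\tfrac12\dot y^2+\tilde G_i(y)=c$ correspond bijectively to those of the original system, and the minimal periods agree. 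Hence the period function of the translated system is exactly $T_i(c)$.

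Next I will check that \cref{H1} holds for $\tilde g_i$, with $a^*=\alpha_i-{\bf x}_i<0<b^*=\beta_i-{\bf x}_i$. By \cref{H2}, $y\,\tilde g_i(y)>0$ for $y\in(a^*,b^*)\setminus\{0\}$. By the equivalence noted after \cref{H1} (see \cite{Chouikha1999}), this gives the representation $\tilde g_i(y)=y^{2N+1}h(y)$ with $h>0$. The hypothesis $g_i'({\bf x}_i)>0$ forces $N=0$ and $h(0)=g_i'({\bf x}_i)$, so smoothness is only needed at the $C^2$ level used in \eqref{Chow_eq1}, as the paper already remarks. The endpoint condition $0<\tilde G_i(a^*)=\tilde G_i(b^*)=c^*$ is the translate of the condition on $G_i$ at $\alpha_i,\beta_i$ in \cref{H2}.

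It remains to transport the Chow--Wang quantity. Since $\tilde g_i'(0)=g_i'({\bf x}_i)$, $\tilde g_i''(y)=g_i''(y+{\bf x}_i)$ and $\tilde G_i(y)=G_i(y+{\bf x}_i)$, the function \eqref{Chow_eq1} built from $\tilde g_i$ equals $H_i(y+{\bf x}_i)$, with $H_i$ as in \eqref{ChowCondition}. Its sign condition on $(a^*,b^*)\setminus\{0\}$ is therefore exactly the assumed sign condition on $(\alpha_i,\beta_i)\setminus\{{\bf x}_i\}$. Applying \cref{Chow} then gives $T_i'(c)>0$ (respectively $<0$) on $(0,c_i^*)$, where $c_i^*=G_i(\alpha_i)=G_i(\beta_i)$.

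The only step needing care is the regularity mismatch: \cref{Chow} is stated for smooth $g$, while here $g_i\in C^2$. I would handle this by noting that the Chow--Wang derivation of $T'(c)$ uses only $g$, $g'$ and $g''$, which is what the paper's remark after \cref{Chow} asserts, and take that as given. The argument is otherwise routine bookkeeping, and the index $i$ is handled independently for $i=1,2$.
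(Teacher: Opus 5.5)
Your proposal is correct and matches the paper's argument: the paper obtains \cref{cor3} directly from \cref{Chow} via the translation $y=x-\mathbf{x}_i$, and you have written out the bookkeeping it leaves implicit, namely the transport of \cref{H1}, of the Chow--Wang function, and of the period function. One small simplification: since $g_i'(\mathbf{x}_i)>0$, you get $\tilde g_i(y)=y\,h(y)$ directly with $h(y)=\int_0^1\tilde g_i'(ty)\,\text{d}t$, which is positive because it equals $\tilde g_i(y)/y$ for $y\neq 0$ and $\tilde g_i'(0)$ at $y=0$; this avoids relying on the general equivalence quoted after \cref{H1}, which fails for functions vanishing to infinite order at the equilibrium.
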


%\subsubsection{Results on planar systems switching between two Newtonian Hamiltonian subsystems}

%Combining \cref{MainTheorem1} with \cref{cor3}, we obtain the following result.

Consider the following $T$-periodic switching system in $\mathbb{R}^2$, 
\begin{equation}\label{SwitchingSystem3}
	\begin{cases}
		\dot{x}&=y,\\
		\dot{y}&=\begin{cases}
			-g_1(x), \qquad  nT & \leq t<nT+T_1,\\
			-g_2(x), \quad  nT+T_1 &\leq t<nT+T_1+T_2.
		\end{cases}
	\end{cases}
\end{equation}
where $T_1+T_2=T$.
$T_i$ is the dwell time associated with the Newtonian Hamiltonian subsystems:
\begin{equation}\label{SwitchingSystem3_1}
	\begin{cases}
		\dot{x}&=y,\\
		\dot{y}&=-g_i(x).
	\end{cases}
\end{equation}
Similar to \cref{hypothesis1}, we propose the following hypothesis.
\begin{Hypothesis}\label{hypothesis4}
	For each $i$, $g_i\in C^2(\mathbb{R})$ and there exist two closed orbits $\gamma_i$ and $\Gamma_i$ associated with energy levels $c_i$ and $C_i$, respectively, with $c_i<C_i$. Assume that $\Gamma_i$ contains exactly one critical point $({\bf x}_i,0)$ of \eqref{SwitchingSystem3} in its interior.
\end{Hypothesis}

%	
%	For each $i=1,2$, denote by $T_i(c)$ the period of the closed orbit of \eqref{SwitchingSystem3} corresponding to the energy level $c$. 

%
%	Define
%	\begin{equation}
	%		G_i(x)=\int_{{\bf x}_i}^x g(\xi) \text{d} \xi,
	%	\end{equation}
%	and 
\begin{notation}
	Let
	\begin{equation}
		\mathcal{R}_i=\big\{(x,y)\in\mathbb{R}^2 : c_i<\mathcal{E}_i(x,y)<C_i\big\}
	\end{equation}
	be the open annular region bounded by $\gamma_i$ and $\Gamma_i$, where $\mathcal{E}_i(x,y)=\frac12 y^2+G_i(x)$.
\end{notation}

Combining \cref{MainTheorem1} with \cref{cor3}, we obtain the following result.
\begin{theorem}\label{MainTheorem3}	
	Assume that \cref{H2} and \cref{hypothesis4} hold. Suppose further that the following  conditions are satisfied:
	\begin{enumerate}[label=(\roman*)]
		\item There exist indices $i\neq j$ such that either
		$\gamma_i\cap\mathcal{R}_j\neq\emptyset$ and
		$\gamma_i\not\subset\mathcal{R}_j$, or
		$\Gamma_i\cap\mathcal{R}_j\neq\emptyset$ and
		$\Gamma_i\not\subset\mathcal{R}_j$. 
		\item For $i=1,2$, there exist $-\infty\leq \alpha_i < 0 < \beta_i\leq +\infty$, such that 
		\begin{equation}
			(x-{\bf x}_i)g_i(x)>0,\quad x\in(\alpha_i,\beta_i)\backslash \{{\bf x}_i\},
		\end{equation}
		and
		\begin{equation}
			0<G(\alpha_i)=G(\beta_i)\leq +\infty.
		\end{equation}
		\item 	For $i=1,2$, $g_i'({\bf x}_i)>0 $.
		\item For all $x_i\in(\alpha_i,\beta_i)\backslash \{{\bf x}_i\}$,
		\begin{equation}
			H_i(x)\neq 0,
		\end{equation}where $H_i(\cdot)$ is defined by \eqref{ChowCondition}.
	\end{enumerate}
	Then, there exist constants $T_1^*>0$ and $T_2^*>0$ such that, whenever
	\begin{equation}
		T_i \ge 5T_i^*, \quad T_j \ge 3T_j^*, \quad i,j\in{1,2},\ i\neq j,
	\end{equation}
	the time-$T$ Poincar\'e map associated with the switching system \eqref{SwitchingSystem3} exhibits a topological horseshoe, where $T=T_1+T_2$.
\end{theorem}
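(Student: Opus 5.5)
The plan is to reduce \cref{MainTheorem3} to \cref{MainTheorem1}. For the Newtonian subsystems \eqref{SwitchingSystem3_1} this means checking \cref{hypothesis1}, \cref{hypothesis2}, and Conditions (i)--(ii) of \cref{MainTheorem1}. Condition (i) is assumed verbatim, so the work is in the hypotheses and in the monotonicity of the period function.

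\textbf{Step 1: the hypotheses of \cref{MainTheorem1}.} Since $g_i\in C^2(\mathbb{R})$, the vector field $(y,-g_i(x))$ is $C^2$, hence locally Lipschitz; this gives \cref{hypothesis2}. \cref{hypothesis4} supplies the closed orbits $\gamma_i\subset\{\mathcal{E}_i=c_i\}$ and $\Gamma_i\subset\{\mathcal{E}_i=C_i\}$. By Condition (ii) (equivalently \cref{H2}), $(x-{\bf x}_i)g_i(x)>0$ on $(\alpha_i,\beta_i)\setminus\{{\bf x}_i\}$. Hence $G_i$ is strictly decreasing on $(\alpha_i,{\bf x}_i)$ and strictly increasing on $({\bf x}_i,\beta_i)$, with $G_i({\bf x}_i)=0$. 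Consequently each level set $\{\tfrac12y^2+G_i(x)=c\}$ with $0<c<G_i(\alpha_i)=G_i(\beta_i)$ is a single closed orbit surrounding the unique critical point $({\bf x}_i,0)$. These orbits are nested, so $\mathcal{R}_i$ is the annulus foliated by them and \cref{hypothesis1} holds.

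\textbf{Step 2: the period function.} By Condition (iv), $H_i$ does not vanish on $(\alpha_i,\beta_i)\setminus\{{\bf x}_i\}$, and $H_i$ is continuous there. The two components $(\alpha_i,{\bf x}_i)$ and $({\bf x}_i,\beta_i)$ are separate intervals, so a priori $H_i$ could take different signs on them. Chow--Wang's criterion (\cref{Chow}, via \cref{cor3}) needs a single sign. I would handle this in one of two ways:
\begin{itemize}
\item Expand near ${\bf x}_i$: one has $H_i(x)=g_i'({\bf x}_i)^2(x-{\bf x}_i)^2\cdot(\text{const})+\dots$, which would give a common sign on both sides.
\item Otherwise, read Condition (iv) as asserting a fixed sign, which is what \cref{cor3} requires.
\end{itemize}
In either case \cref{cor3}, together with Condition (iii) $g_i'({\bf x}_i)>0$, gives $T_i'(c)>0$ (or $<0$) for energy levels $0<c<c_i^*$. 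Moreover $T_i$ is continuous (indeed $C^1$) in the energy, since the orbits are regular level sets.

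\textbf{Step 3: change of parameter.} \cref{MainTheorem1} parametrizes orbits by normalized area, $c\mapsto s_i+c(S_i-s_i)$, not by energy. By the area--period relation (\cref{ManosasThmA}, or directly $A_i(h)=2\int\sqrt{2(h-G_i)}\,dx$), we get $A_i'(h)=T_i(h)>0$. So the area is a strictly increasing $C^1$ function of the energy. Composing with its inverse, the period as a function of the area parameter is continuous and strictly monotone on $(0,1)$, which is Condition (ii) of \cref{MainTheorem1}. Applying \cref{MainTheorem1} then yields $T_1^*,T_2^*$ and the horseshoe.

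The main obstacle I expect is Step 2: the sign consistency of $H_i$ across ${\bf x}_i$, and the correct translation of \cref{Chow} to a non-origin center with possibly finite $c_i^*$ covering the whole annulus $[c_i,C_i]$. The remaining steps are routine verifications.
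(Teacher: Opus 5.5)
Your route is the paper's own. Its proof of \cref{MainTheorem3} is the single sentence ``combine \cref{MainTheorem1} with \cref{cor3},'' and your Steps~1 and~3 supply what that sentence leaves implicit. The most useful addition is the passage from the energy parameter to the normalized-area parameter of \cref{MainTheorem1}. Your direct computation $A_i'(h)=2\int_{a(h)}^{b(h)}\frac{dx}{\sqrt{2(h-G_i(x))}}=T_i(h)$ is the right justification for it, since \cref{ManosasThmA} is local and assumes analyticity, whereas here $g_i$ is only $C^2$.

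The first bullet of your Step~2 would fail.

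\textbf{No nonzero quadratic term.} $g_i^2$ and $2G_ig_i'$ both begin with $g_i'({\bf x}_i)^2(x-{\bf x}_i)^2$, and the cubic terms cancel as well, so $H_i=o\bigl((x-{\bf x}_i)^3\bigr)$. Any sign information therefore sits at order four or higher. $C^2$ regularity does not even give you that order, and the quartic coefficient may vanish anyway.

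\textbf{Counterexample.} Take $g_i(x)=x+dx^4$ with $d\neq0$, so ${\bf x}_i=0$. It satisfies (ii) and (iii) on a small interval, while $H_i(x)$ equals a nonzero multiple of $x^5$ plus $O(x^8)$. So $H_i$ is nonzero on the punctured interval but has opposite signs on the two sides, and \cref{cor3} gives nothing. Condition~(iv) as written therefore does not imply the single-sign hypothesis of \cref{cor3}. You must take your second bullet, i.e.\ assume $H_i>0$ throughout, or $H_i<0$ throughout, $(\alpha_i,\beta_i)\setminus\{{\bf x}_i\}$.

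\textbf{The range issue.} The other obstacle you flag is of the same kind. \cref{cor3} controls $T_i$ only for energies in $(0,c_i^*)$, where $c_i^*=G_i(\alpha_i)=G_i(\beta_i)$. Your Step~1 claim that $\mathcal{R}_i$ is foliated by the closed level curves in the strip $\alpha_i<x<\beta_i$ silently uses $C_i\le c_i^*$. Nothing in \cref{H2}, \cref{hypothesis4} or (ii) enforces this, so it must also be imposed.

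With these two readings added, both of which the paper's one-line proof makes tacitly, your argument goes through.
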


\begin{remark}
	Additional sufficient conditions ensuring that the period function is nondecreasing can be found in Ref.~\cite{Chouikha1999}.  In the present context, such criteria must be examined and, if necessary, strengthened, since strict monotonicity of the period functions is required.
\end{remark}

\begin{remark}
	We adopt the sufficient condition obtained by Chow and Wang because all the other sufficient conditions discussed in Ref.~\cite{Chouikha1999} imply it. Nevertheless, as noted therein, this condition does not constitute an optimal criterion for monotonicity.
\end{remark}

\section{Applications}\label{application}

\subsection{A toy example}
Consider the following system in polar coordinates:
\begin{equation}\label{eg1_eq3}
	\begin{cases}
		\dot{r}=0,\\
		\dot{\theta}=\frac{1}{1+r}.
	\end{cases}
\end{equation}

By translating its center to \((1,0)\), we obtain the following system in  Cartesian coordinates \((x,y)\):
\begin{equation}\label{eg1_eq1}
	\begin{cases}
		\dot{x}=-\frac{y}{1+\sqrt{(x-1)^2+y^2}}\\
		\dot{y}=\frac{x-1}{1+\sqrt{(x-1)^2+y^2}}.
	\end{cases}
\end{equation}
Similarly, translating the center of the system
\begin{equation}\label{eg1_eq4}
	\begin{cases}
		\dot{r}=0,\\
		\dot{\theta}=-\frac{1}{1+r},
	\end{cases}
\end{equation}
to $(-1,0)$, we obtain
\begin{equation}\label{eg1_eq2}
	\begin{cases}
		\dot{x}=\frac{y}{1+\sqrt{(x+1)^2+y^2}},\\
		\dot{y}=-\frac{x+1}{1+\sqrt{(x+1)^2+y^2}}.
	\end{cases}
\end{equation}

For convenience, we define the curves \(\gamma_i\) and \(\Gamma_i\) as follows (see \cref{eg1_1}):
\begin{align}
	\gamma_1&:=\big\{\phi_1\big(t;(0,0)\big):t\geq 0\big\},\\
	\Gamma_1&:=\big\{\phi_1\big(t;(-1,0)\big):t\geq 0\big\},\\
	\gamma_2&:=\big\{\phi_2\big(t;(0,0)\big):t\geq 0\big\},\\
	\Gamma_2&:=\big\{\phi_2\big(t;(1,0)\big):t\geq 0\big\},
\end{align}
where \(\phi_1\big(t;(x,y)\big)\) and \(\phi_2\big(t;(x,y)\big)\) denote the flow maps associated with \eqref{eg1_eq1} and \eqref{eg1_eq2}, respectively.
\begin{figure}
	\centering
	\includegraphics[width=\textwidth]{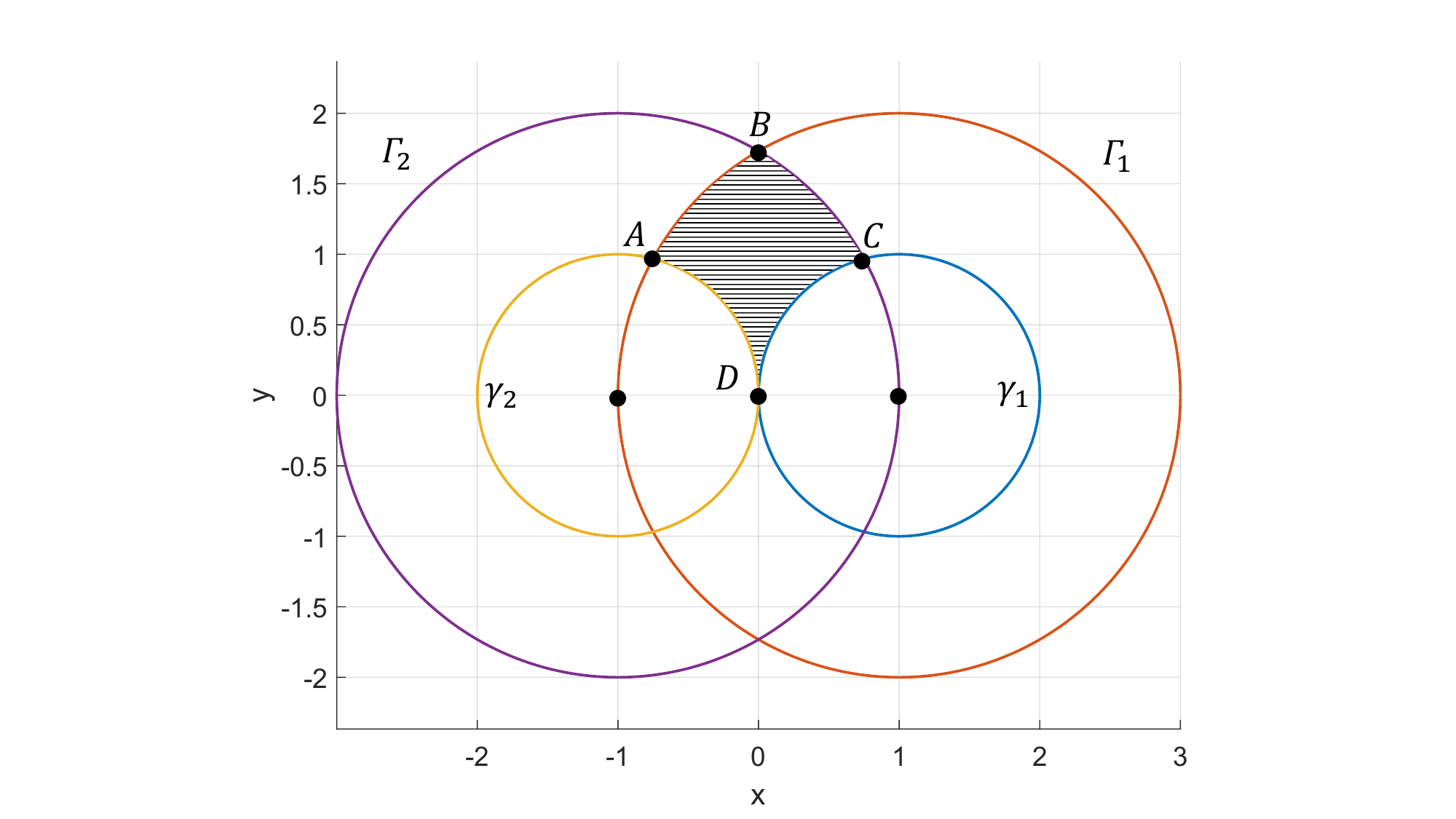}
	\caption{$\gamma_i,\Gamma_i~(i=1,2)$ and the curvilinear quadrilateral $ABCD$.}
	\label{eg1_1}
\end{figure}

From \eqref{eg1_eq3} and \eqref{eg1_eq4}, it follows that
\begin{align}
	\mathcal{T}_1(0,0)&=\mathcal{T}_2(0,0)=4\pi,\\
	\mathcal{T}_1(-1,0)&=\mathcal{T}_2(1,0)=6\pi.
\end{align}
Consequently,
\begin{equation}
	T_1^*=\frac{4\pi\times6\pi}{|4\pi-6\pi|}=12\pi=T_2^*.
\end{equation}
According to \cref{MainTheorem1}, we choose
\begin{equation}
	T_1=5T_1^*=60\pi,\qquad T_2=3T_2^*=36\pi.
\end{equation}
The orbit with initial condition \((0,1)\) over the time interval \([0,1000\pi]\) is depicted in \cref{eg1_2}, while \cref{eg1_3} illustrates the time evolution of \(x(t)\) and \(y(t)\).
The numerical solution is computed using the solver \texttt{ode45} with tolerance settings
\begin{equation}
	\texttt{RelTol}=10^{-8},\qquad \texttt{AbsTol}=10^{-10}.
\end{equation} 

\begin{figure}
	\centering
	\includegraphics[width=0.8\textwidth]{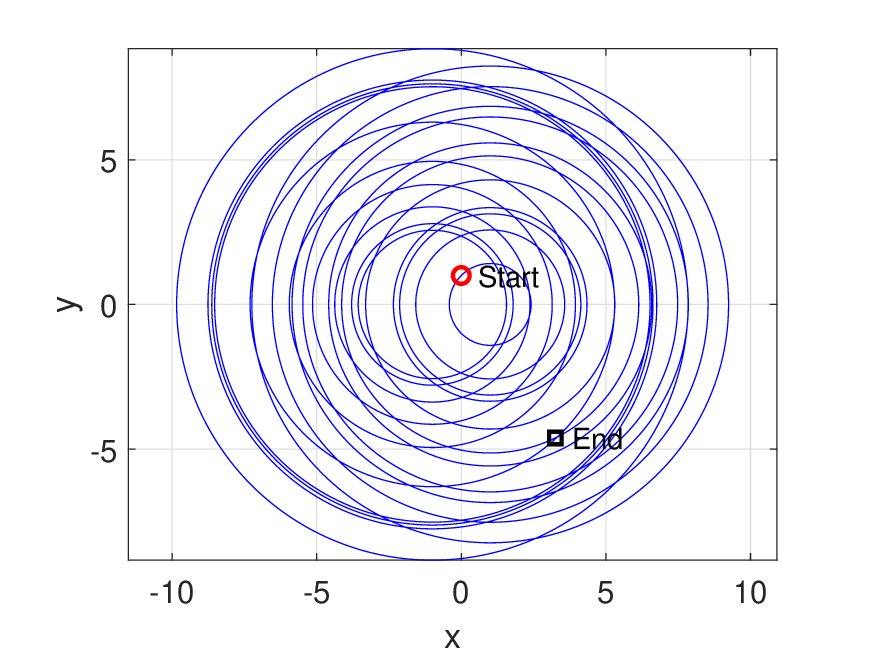}
	\caption{Orbit with initial condition $(0,1)$ over the time interval \([0,1000\pi]\). The circle marks the initial point, and the square marks the terminal point.}
	\label{eg1_2}
\end{figure}
\begin{figure}
	\centering
	\includegraphics[width=0.8\textwidth]{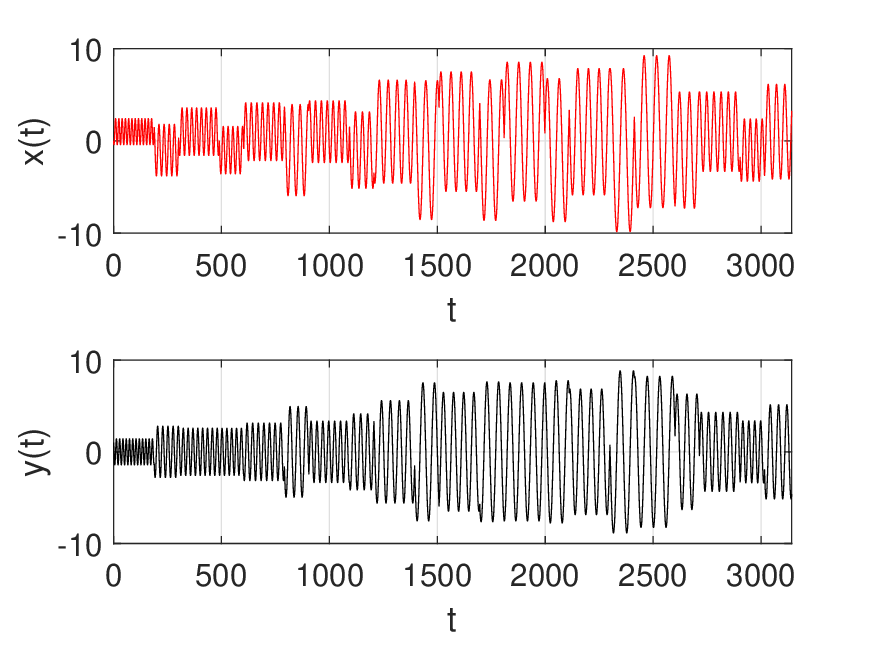}
	\caption{Time evolution of the state variables.}
	\label{eg1_3}
\end{figure}

\subsection{The reduced seasonally forced SIR epidemic model}\label{SIRe}
In Ref.~\cite{Barrientos2017}, the authors reduce the seasonally forced SIR epidemic model to a periodic switching system composed of two Newtonian Hamiltonian subsystems of the form \eqref{SIR}, in order to study chaotic dynamics of the original model.
\begin{equation}\label{SIR}
	\begin{cases}
		\dot{x}=-y,\\
		\dot{y}=B(1+y)(Ax+\alpha_i),
	\end{cases}
\end{equation}
where $\alpha_1=-1,~\alpha_2=1,~A,B>0$.

In this subsection, we derive sharper lower bounds on the dwell time of each subsystem that ensure the occurrence of chaotic dynamics in the switching system. Moreover, in contrast to the symmetric bounds obtained in Ref.~\cite{Barrientos2017}, we show that these bounds need not be identical.

By a slight modification of \cref{MainTheorem3}, or alternatively by applying the classical results in Ref.~\cite{Chow1986}, it follows that the derivative of the period function associated with each energy level \(c>0\) is strictly positive.

For the purpose of numerical simulation, we fix \(A=B=1\), yielding the two subsystems
\begin{equation}\label{eg2_eq1}
	\begin{cases}
		\dot{x}=-y,\\
		\dot{y}=(1+y)(x+\alpha_i).
	\end{cases}
\end{equation}

We define the curves \(\gamma_i\) and \(\Gamma_i\) as follows (see \cref{eg2_1}):
\begin{align}
	\gamma_1&:=\big\{\phi_1\big(t;(-\tfrac13,0)\big): t\geq 0\big\},\\
	\Gamma_1&:=\big\{\phi_1\big(t;(-\tfrac23,0)\big): t\geq 0\big\},\\
	\gamma_2&:=\big\{\phi_2\big(t;(\tfrac13,0)\big): t\geq 0\big\},\\
	\Gamma_2&:=\big\{\phi_2\big(t;(\tfrac23,0)\big): t\geq 0\big\},
\end{align}
where $\phi_i\big(t;(x,y)\big)$ denote the flow map associated with \eqref{eg2_eq1}.
\begin{figure}
	\centering
	\includegraphics[width=0.8\textwidth]{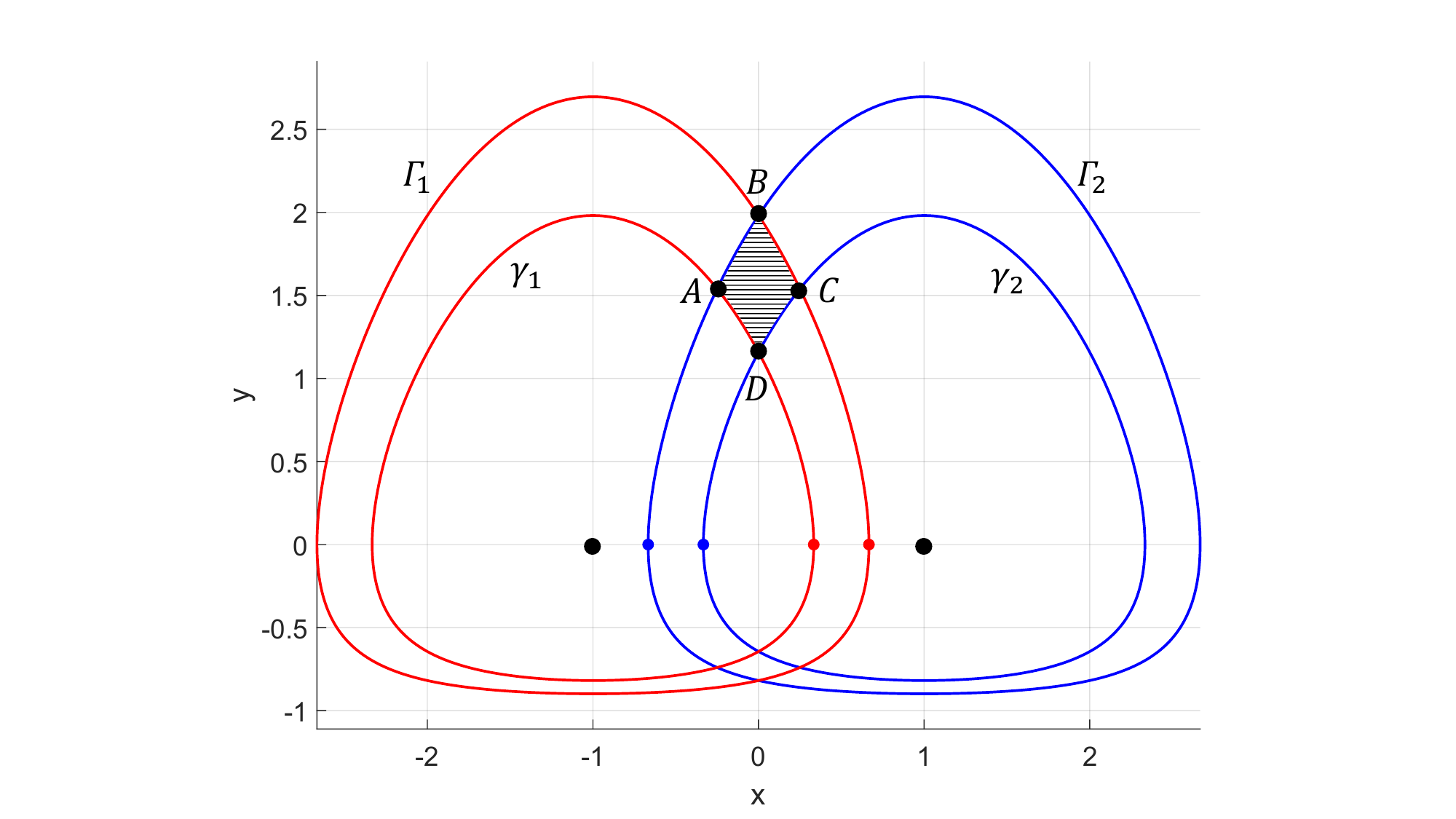}
	\caption{$\gamma_i,\Gamma_i~(i=1,2)$ and the curvilinear quadrilateral $ABCD$.}
	\label{eg2_1}
\end{figure}

Since the period functions of \eqref{eg2_eq1} do not have explicit expressions, we employ a Runge–Kutta scheme to approximate the periods corresponding to \(\gamma_i\) and \(\Gamma_i\) (\(i=1,2\)), obtaining
\begin{align}
	\mathcal{T}_1\big(-\tfrac13,0\big)&=\mathcal{T}_2\big(\tfrac13,0\big)=6.755~238~541~9,\\
	\mathcal{T}_1\big(-\tfrac23,0\big)&=\mathcal{T}_2\big(\tfrac23,0\big)=7.023~911~051~3.
\end{align}
Consequently,
\begin{equation}
	T_1^*=\frac{\mathcal{T}_1(-\tfrac13,0),\mathcal{T}_1(-\tfrac23,0)}{\left|\mathcal{T}_1(-\tfrac13,0)-\mathcal{T}_1(-\tfrac23,0)\right|}=176.602~342~958~647=T_2^*.
\end{equation}

In Ref.~\cite{Barrientos2017}, it is shown that the switching system \eqref{SIR} exhibits chaotic dynamics provided that
\begin{align}
	T_1&>\frac{6\mathcal{T}_1(-\tfrac13,0)\mathcal{T}_1(-\tfrac23,0)}{\left|\mathcal{T}_1(-\tfrac13,0)-\mathcal{T}_1(-\tfrac23,0)\right|}=6T_1^*,\\
	T_2&>\frac{6\mathcal{T}_2(\tfrac13,0)\mathcal{T}_2(\tfrac23,0)}{\left|\mathcal{T}_2(\tfrac13,0)-\mathcal{T}_2(\tfrac23,0)\right|}=6T_2^*.
\end{align}
In contrast, \cref{MainTheorem3} implies that chaotic dynamics already arise under the weaker condition that there exist indices \(i\neq j\) such that \(T_i\geq 5T_i^*\) and \(T_j\geq 3T_j^*\). For example, choosing
\begin{equation}
	T_1=5T_1^*,~T_2=3T_2^*,
\end{equation}
we depict in \cref{eg2_2} the orbit over the time interval \([0,80T_1^*]\) with initial condition \((0,1.5)\).
\begin{figure}
	\centering
	\includegraphics[width=0.8\textwidth]{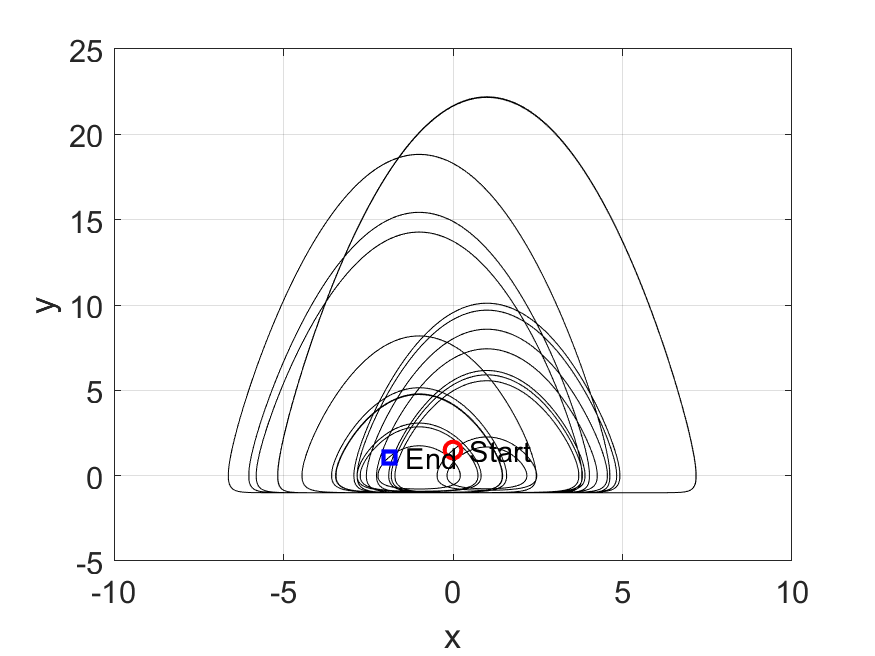}
	\caption{Orbit with initial condition $(0,1.5)$ over the time interval \([0,80T_1^*]\). The circle marks the initial point, and the square marks the terminal point.}
	\label{eg2_2}
\end{figure}
\begin{remark}
	Since the period of the switching system is significantly larger than those of the individual subsystems, higher numerical accuracy is required. Therefore, instead of using \texttt{ode45} as in the previous example, we employ the solver \texttt{ode113} with tolerance settings
	\begin{equation}
		\texttt{RelTol}=10^{-12},\qquad \texttt{AbsTol}=10^{-14},
	\end{equation} 
	to reduce the accumulated numerical error.
\end{remark}

\section{Discussion}\label{discussion}

In contrast to Ref.~\cite{Barrientos2017}, we investigate a broader class of  systems and establish explicit geometric criteria for the existence of the topological horseshoe. 
Moreover, by a more detailed analysis of the mechanism underlying the formation of the topological horseshoe, we derive sharper lower bounds on the dwell times of the subsystems \eqref{SIRe}. Notably, these bounds are shown to be asymmetric, which is somewhat counterintuitive.

Under the assumptions of \cref{MainTheorem1}, the associated Poincaré map $P_2\circ P_1$ (or $P_1\circ P_2$) has a complete topological horseshoe consisting of two crossing blocks. Even when these conditions fail, the system may still exhibit a semi-horseshoe structure in the sense of Ref.~\cite{Huang2021} (see \cref{illustration9}), which likewise guarantees chaotic behavior as shown in Ref.~\cite{Cheng2025a}.

On the other hand, the configuration illustrated in \cref{illustration10} may also occur. This represents a typical non-chaotic scenario, as analyzed in Ref.~\cite{Cheng2025a}. Furthermore, motivated by the notion of semi-horseshoes,  higher iterates of the Poincaré map may further reduce the lower bounds on the dwell times, which remains an open problem.

\begin{figure}
	\centering
	\includegraphics[width=0.8\textwidth]{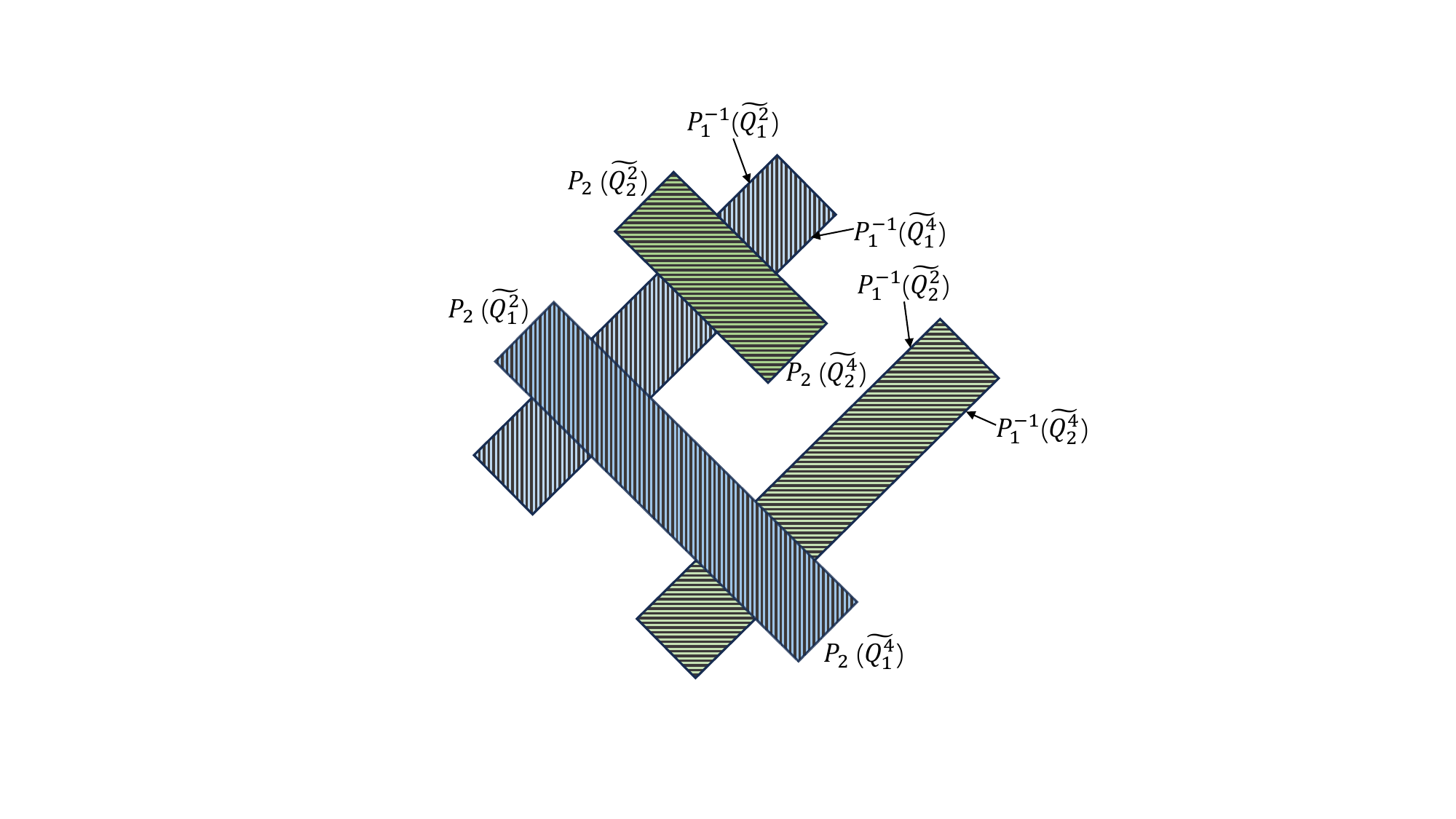}
	\caption{Illustration of a semi-horseshoe that may occur.}
	\label{illustration9}
\end{figure}

\begin{figure}
	\centering
	\includegraphics[width=0.8\textwidth]{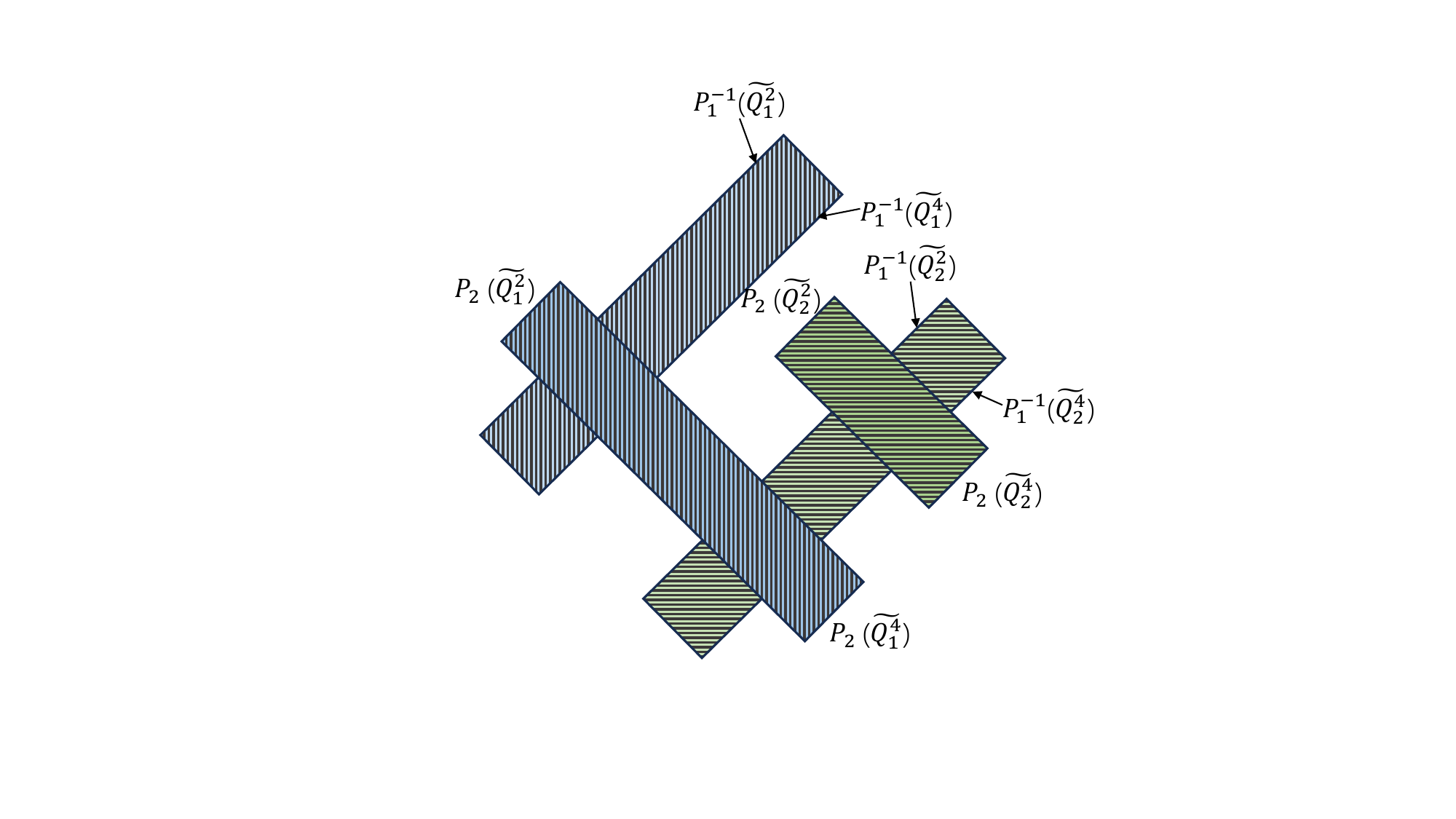}
	\caption{An incomplete horseshoe that does not induce chaotic behavior.}
	\label{illustration10}
\end{figure}

It is also of interest to consider the case where the dwell times $T_1$ and $T_2$ are not fixed. In particular, one can show that if \(T_i \geq 5T_i^*\) and \(T_j \geq 3T_j^*\) for \(i \neq j\), then the required crossing relations are satisfied. However, this does not guarantee the existence of a topological horseshoe, since the corresponding Poincar\'e map is no longer fixed. This phenomenon is of practical relevance and calls for further investigation.

\section{Sumamary}\label{summary}
In this paper, we establish sufficient conditions for the existence of topological horseshoes in general periodic switching systems and periodic switching Hamiltonian systems. In particular, for Newtonian Hamiltonian systems, we derive an explicit and computable criterion based on the monotonicity of the period function, following the approach of Chow and Wang. A toy example, together with refined results for the seasonally forced SIR model, indicates that the lower bounds on the dwell times of the subsystems need not be symmetric.

Several questions remain open. First, it is natural to ask whether sharper lower bounds on the dwell times can be obtained by considering higher iterates of the Poincar\'e map. Second, from a practical perspective, it is important to determine whether the chaotic dynamics persist when the dwell times satisfy the proposed conditions but are allowed to vary in time rather than remain fixed.

\bibliographystyle{unsrt}  % 或其他样式如 unsrt, abbrv, ieeetr 等	
\bibliography{Switching}% Produces the bibliography via BibTeX.

\end{document}